 \newcommand{\nm}[1]{\left\lVert {#1} \right\rVert}
 \newcommand{\dual}[1]{\left\langle {#1} \right\rangle}
\journalname{}
\begin{document}

\title{On the Convergence of Newton-type Proximal Gradient Method for Multiobjective Optimization Problems}


\author{Jian Chen \and Xiaoxue Jiang \and Liping Tang \and  Xinmin Yang  }

\institute{J. Chen \at National  Center  for  Applied  Mathematics in Chongqing, Chongqing Normal University, Chongqing 401331, China, and School of Mathematical Sciences, University of Electronic Science and Technology of China, Chengdu, Sichuan 611731, China\\
                    \href{mailto:chenjian_math@163.com}{chenjian\_math@163.com}\\
                    X.X. Jiang \at College of Mathematics, Sichuan University,
                    Chengdu, 610065, China\\
                    \href{xxjiangmath@126.com}{xxjiangmath@126.com}\\
                   L.P. Tang \at National Center for Applied Mathematics in Chongqing, and School of Mathematical Sciences,  Chongqing Normal University, Chongqing 401331, China\\
                   \href{mailto:tanglipings@163.com}{tanglipings@163.com}\\
        \Letter X.M. Yang \at National Center for Applied Mathematics in Chongqing, and School of Mathematical Sciences,  Chongqing Normal University, Chongqing 401331, China\\
        \href{mailto:xmyang@cqnu.edu.cn}{xmyang@cqnu.edu.cn} \\}

\date{Received: date / Accepted: date}

\maketitle

\begin{abstract}
	In a recent study, Ansary (Optim Methods Softw 38(3):570-590,2023) proposed a Newton-type proximal gradient method for nonlinear multiobjective optimization problems (NPGMO). However, the favorable convergence properties typically associated with Newton-type methods were not established for NPGMO in Ansary's work. In response to this gap, we develop a straightforward framework for analyzing the convergence behavior of the NPGMO. Specifically, under the assumption of strong convexity, we demonstrate that the NPGMO enjoys quadratic termination, superlinear convergence, and quadratic convergence for problems that are quadratic, twice continuously differentiable and twice Lipschitz continuously differentiable, respectively. 

\keywords{Multiobjective optimization \and Newton-type method \and Proximal gradient method \and Convergence}
\subclass{90C29 \and 90C30}
\end{abstract}

\section{Introduction}
A multiobjective composite optimization problems (MCOPs) can be formulated as follows:
\begin{align*}
	\min\limits_{x\in\mathbb{R}^{n}} F(x), \tag{MCOP}\label{MCOP}
\end{align*}
where $F:\mathbb{R}^{n}\rightarrow\mathbb{R}^{m}$ is a vector-valued function. Each component $F_{i}$, $i=1,2,...,m$, is defined by
$$F_{i}:=f_{i}+g_{i},$$ 
where $f_{i}$ is continuously differentiable and $g_{i}$ is proper convex and lower semicontinuous but not necessarily differentiable. 
\par Over the past two decades, {\it descent methods} have garnered increasing attention within the multiobjective optimization community (see, e.g., \cite{AP2021,BI2005,CL2016,CTY2023,FS2000,FD2009,FV2016,GI2004,LP2018,MP2018,MP2019,P2014,QG2011,TFY2019} and references therein). These methods generate descent directions by solving subproblems, eliminating the need for predefined parameters. As far as we know, the study of multiobjective gradient descent methods can be traced back to the pioneering efforts of Mukai \cite{M1980} as well as the seminal work of Fliege and Svaiter \cite{FS2000}.
\par Recently, Tanabe et al. \cite{TFY2019} proposed a proximal gradient method for MCOPs (PGMO), and their subsequent convergence rates analysis \cite{TFY2023} revealed that the PGMO exhibits slow convergence when dealing with ill-conditioned problems. Notably, most of multiobjective first-order methods are prone to sensitivity concerning problem conditioning. In response to this challenge, Fliege et al. \cite{FD2009} leveraged a quadratic model to better capture the problem's geometry, and proposed the Newton's method for MOPs (NMO). Parallel to its single-objective counterpart, the NMO demonstrates locally superlinear and quadratic convergence under standard assumptions. Building upon this foundation, Ansary recently adopted the idea of Fliege et al. \cite{FD2009}, and extended it to derive the Newton-type proximal gradient method for MOPs (NPGMO) \cite{A2023}. However, the appealing convergence properties associated with Newton's method were not established within Ansary's work. It is worth mentioning that Newton-type proximal gradient method for SOPs \cite{LSS2014} exhibits convergence akin to Newton's method. This naturally raises the question: Does NPGMO enjoy the same desirable convergence properties as NMO?
\par The primary objective of this paper is to provide an affirmative answer to the question. The contributions of this study can be summarized as follows:
\par (i) We demonstrate that the NPGMO finds an exact Pareto solution in one iteration for strongly convex quadratic problems.
\par (ii) Given the assumptions that $f_{i}$ is both strongly convex and twice continuously differentiable for $i=1,2,...,m$, we prove that NPGMO is strong convergence, and the local convergence rate is superlinear. Additionally, by further assuming that each $f_{i}$ is twice Lipschitz continuously differentiable, the local convergence rate of NPGMO is elevated to a quadratic rate.
\par (iii) We derive a simplified analysis framework to elucidate the appealing convergence properties of NPGMO, which can be linked back to the properties of NMO. The proofs are grounded in a modified fundamental inequality, a common tool employed in the convergence analysis of first-order methods.

\par The paper is organized as follows. In section \ref{sec2}, we present some necessary notations and definitions that will be used later. Section \ref{sec3} revisits the descriptions and highlights certain properties of NPGMO. The convergence analysis of NPGMO is detailed in Section \ref{sec4}. Lastly, we draw our conclusions in the final section of the paper.

\section{Preliminaries}\label{sec2}
Throughout this paper, the $n$-dimensional Euclidean space $\mathbb{R}^{n}$ is equipped with the inner product $\langle\cdot,\cdot\rangle$ and the induced norm $\|\cdot\|$. Denote $\mathbb{S}^{n}_{++}(\mathbb{S}^{n}_{+})$ the set of symmetric (semi-)positive definite matrices in $\mathbb{R}^{n\times n}$. For a positive definite matrix $H$, the notation $\|x\|_{H}=\sqrt{\langle x,Hx \rangle}$ is used to represent the norm induced by $H$ on vector $x$.  Additionally, we define
$$F^{\prime}_{i}(x;d):=\lim\limits_{t\downarrow0}\frac{F_{i}(x+td)-F_{i}(x)}{t}$$
the directional derivative of $F_{i}$ at $x$ in the direction $d$. 
%

For simplicity, we utilize the notation $[m]:={1,2,...,m}$, and define $$\Delta_{m}:=\left\{\lambda:\sum\limits_{i\in[m]}\lambda_{i}=1,\lambda_{i}\geq0,\ i\in[m]\right\}$$ to represent the $m$-dimensional unit simplex. To prevent any ambiguity, we establish the order $\preceq(\prec)$ in $\mathbb{R}^{m}$ as follows: $$u\preceq(\prec)v~\Leftrightarrow~v-u\in\mathbb{R}^{m}_{+}(\mathbb{R}^{m}_{++}),$$
and in $\mathbb{S}^{n}$ as:
$$U\preceq(\prec)V~\Leftrightarrow~V-U\in\mathbb{S}^{n}_{+}(\mathbb{S}^{n}_{++}).$$
\par In the following, we introduce the concepts of optimality for (\ref{MCOP}) in the Pareto sense. 
\vspace{2mm}
\begin{definition}\label{def1}
	A vector $x^{\ast}\in\mathbb{R}^{n}$ is called Pareto solution to (\ref{MCOP}), if there exists no $x\in\mathbb{R}^{n}$ such that $F(x)\preceq F(x^{\ast})$ and $F(x)\neq F(x^{\ast})$.
\end{definition}
\vspace{2mm}
\begin{definition}\label{def2}
	A vector $x^{\ast}\in\mathbb{R}^{n}$ is called weakly Pareto solution to (\ref{MCOP}), if there exists no $x\in\mathbb{R}^{n}$ such that $F(x)\prec F(x^{\ast})$.
\end{definition}
\vspace{2mm}
\begin{definition}\label{def3}
	A vector $x^{\ast}\in\mathbb{R}^{n}$ is called  Pareto critical point of (\ref{MCOP}), if
	$$\max\limits_{i\in[m]}F_{i}^{\prime}(x^{*};d)\geq0,~\forall d\in\mathbb{R}^{n}.$$
\end{definition}

\par From Definitions \ref{def1} and \ref{def2}, it is evident that Pareto solutions are always weakly Pareto solutions. The following lemma shows the relationships among the three concepts of Pareto optimality.
\vspace{2mm}
\begin{lemma}[Theorem 3.1 of \cite{FD2009}] The following statements hold.
	\begin{itemize}
		\item[$\mathrm{(i)}$]  If $x\in\mathbb{R}^{n}$ is a weakly Pareto solution to (\ref{MCOP}), then $x$ is Pareto critical point.
		\item[$\mathrm{(ii)}$] Let every component $F_{i}$ of $F$ be convex. If $x\in\mathbb{R}^{n}$ is a Pareto critical point of (\ref{MCOP}), then $x$ is weakly Pareto solution.
		\item[$\mathrm{(iii)}$] Let every component $F_{i}$ of $F$ be strictly convex. If $x\in\mathbb{R}^{n}$ is a Pareto critical point of (\ref{MCOP}), then $x$ is Pareto solution.
	\end{itemize}
\end{lemma}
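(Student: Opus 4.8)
The plan is to prove all three statements by contraposition, relying on two elementary facts about convex functions. First, for each $i$ the difference quotient $t\mapsto (F_i(x+td)-F_i(x))/t$ is nondecreasing on $(0,+\infty)$, so that $F_i'(x;d)=\inf_{t>0}(F_i(x+td)-F_i(x))/t$ and in particular $F_i'(x;d)\le (F_i(x+td)-F_i(x))/t$ for every $t>0$. Second, the index set $[m]$ is finite, which lets me turn finitely many pointwise strict inequalities into a single uniform statement about a common step size.

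For (i) I would argue the contrapositive: if $x$ fails to be Pareto critical, there is a direction $d$ with $\max_{i\in[m]}F_i'(x;d)<0$, hence $F_i'(x;d)<0$ for every $i$. By definition of the directional derivative, for each $i$ there is $t_i>0$ with $F_i(x+td)<F_i(x)$ whenever $0<t<t_i$; choosing $t<\min_{i\in[m]}t_i$ (possible since $m$ is finite) yields $F(x+td)\prec F(x)$, so $x$ is not a weakly Pareto solution.

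For (ii), again by contraposition, suppose $x$ is not weakly Pareto, so there is $y$ with $F(y)\prec F(x)$. Setting $d=y-x$ and using convexity through the bound $F_i'(x;d)\le F_i(x+d)-F_i(x)=F_i(y)-F_i(x)<0$ for every $i$, I obtain $\max_{i\in[m]}F_i'(x;d)<0$, so $x$ is not Pareto critical.

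The delicate case is (iii), which is where I expect the only real obstacle. Assuming $x$ is not a Pareto solution, there is $y$ with $F(y)\preceq F(x)$ and $F(y)\neq F(x)$; in particular $y\neq x$. The naive convexity estimate used in (ii) now only gives $F_i'(x;y-x)\le F_i(y)-F_i(x)\le 0$, which is too weak to contradict Pareto criticality, since criticality is violated only by a direction making \emph{all} directional derivatives strictly negative. The remedy is to invoke strict convexity at the midpoint: for each $i$, $F_i\!\left(\tfrac{x+y}{2}\right)<\tfrac12\bigl(F_i(x)+F_i(y)\bigr)\le F_i(x)$, where the strict inequality uses $y\neq x$ and the last step uses $F_i(y)\le F_i(x)$. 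Feeding this into the monotone difference quotient with $t=\tfrac12$ gives $F_i'(x;y-x)\le 2\bigl(F_i(\tfrac{x+y}{2})-F_i(x)\bigr)<0$ for every $i$, so $\max_{i\in[m]}F_i'(x;y-x)<0$ and $x$ is not Pareto critical. This completes the contrapositive and is the step that genuinely requires strictness of the convexity rather than mere convexity.
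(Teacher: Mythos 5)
Your proof is correct, but note that the paper itself offers no proof of this lemma: it is quoted verbatim as Theorem 3.1 of \cite{FD2009}, so the only comparison available is with that cited source. There the result is proved for smooth $F$, where Pareto criticality is phrased through the Jacobian and the convex cases (ii)--(iii) follow from the gradient inequality $F_i(y)\geq F_i(x)+\langle\nabla F_i(x),y-x\rangle$, with strictness of this inequality for $y\neq x$ handling (iii). Your argument is genuinely different in a way that matters here: since the present setting is composite, with $g_i$ convex but possibly nondifferentiable, the gradient-inequality route is not directly available, and you work instead entirely with directional derivatives via the monotone difference quotient $t\mapsto\bigl(F_i(x+td)-F_i(x)\bigr)/t$. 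Parts (i) and (ii) are the standard contrapositive arguments (finiteness of $[m]$ giving a common step size in (i); the quotient at $t=1$ in (ii)), and your midpoint device in (iii), namely $F_i'(x;y-x)\leq 2\bigl(F_i(\tfrac{x+y}{2})-F_i(x)\bigr)<0$, is exactly the right nonsmooth substitute for the strict gradient inequality: it extracts a \emph{strictly} negative directional derivative from the merely nonstrict domination $F(y)\preceq F(x)$, $F(y)\neq F(x)$, which is the one point where mere convexity fails, as you correctly identify. In short, your proof is not only valid but is actually adapted to the composite model (\ref{MCOP}) of this paper, whereas the citation formally covers only the smooth case; the only cosmetic caveat is the usual one that $x$ and $y$ should lie in the domain of $F$ so that the quantities compared are finite, a point the paper also leaves implicit.
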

\vspace{2mm}
\begin{definition}
	A twice continuously differentiable function $h$ is $\mu$-strongly convex if $$\mu I\preceq\nabla^{2}h(x)$$ holds for all $x\in\mathbb{R}^{n}$.
\end{definition}

\section{Newton-type proximal gradient method for MCOPs}\label{sec3}In this section, we revisit the multiobjective proximal Newton-type method, which was originally introduced by by Ansary \cite{A2023}. 
\subsection{Newton-type proximal gradient method}
A multiobjective Newton-type proximal search direction corresponds to the unique optimal solution of the following subproblem:
\begin{equation}\label{subp}
	\mathop{\min}\limits_{d\in\mathbb{R}^{n}}\max\limits_{i\in[m]}\left\{\langle\nabla f_{i}(x),d\rangle+g_{i}(x+d)-g_{i}(x)+\frac{1}{2}\dual{d,\nabla^{2}f_{i}(x)d}\right\},
\end{equation}
namely,
$$d(x):=\mathop{\arg\min}\limits_{d\in\mathbb{R}^{n}}\max\limits_{i\in[m]}\left\{\langle\nabla f_{i}(x),d\rangle+g_{i}(x+d)-g_{i}(x)+\frac{1}{2}\dual{d,\nabla^{2}f_{i}(x)d}\right\}.$$
The optimal value of the subproblem is denoted by
$$\theta(x):=\min\limits_{d\in\mathbb{R}^{n}}\max\limits_{i\in[m]}\left\{\langle\nabla f_{i}(x),d\rangle+g_{i}(x+d)-g_{i}(x)+\frac{1}{2}\dual{d,\nabla^{2}f_{i}(x)d}\right\}.$$
For simplicity, we denote by $$h_{\lambda}(x):=\sum\limits_{i\in[m]}\lambda_{i}h_{i}(x),$$
$$\nabla h_{\lambda}(x):=\sum\limits_{i\in[m]}\lambda_{i}\nabla h_{i}(x),$$
$$\nabla^{2}h_{\lambda}(x):=\sum\limits_{i\in[m]}\lambda_{i}\nabla^{2}h_{i}(x).$$
By Sion's minimax theorem \citep{S1958}, there exists $\lambda^{k}\in\Delta_{m}$ such that
$$d^{k}=\mathop{\arg\min}\limits_{d\in\mathbb{R}^{n}}\left\{\langle\nabla f_{\lambda^{k}}(x^{k}),d\rangle+g_{\lambda^{k}}(x^{k}+d)-g_{\lambda^{k}}(x^{k})+\frac{1}{2}\dual{d,\nabla^{2}f_{\lambda^{k}}(x^{k})d}\right\},$$
we use the first-order optimality condition to get
\begin{equation}\label{subgrad}
	-\nabla f_{\lambda^{k}}(x^{k})-\nabla^{2}f_{\lambda^{k}}(x^{k})d^{k}\in\partial g_{\lambda^{k}}(x^{k}+d^{k}).
\end{equation}

The following lemma presents several properties of $d(x)$. 
\begin{lemma}[Lemma 3.2 and Theorem 3.1 of \cite{A2023}]
	Suppose $f_{i}$ is strictly convex function for all $i\in[m]$. Let $d(x)$ be the optimal solution of (\ref{subp}). Then the following statements hold.
	\begin{itemize}
		\item[$\mathrm{(i)}$] $x\in\mathbb{R}^{n}$ is a Pareto critical point of (\ref{MCOP}) if and only if $d(x)=0$. 
		\item[$\mathrm{(ii)}$] If $\{x^{k}\}$ converges $x^{*}$, and $\{d(x^{k})\}$ converges $d^{*}$, then $d(x^{*})=d^{*}$.
	\end{itemize}
\end{lemma}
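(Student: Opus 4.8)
The plan is to treat the two claims separately. Write $\Psi(x,d)$ for the function maximized in (\ref{subp}), so that $\theta(x)=\min_{d}\Psi(x,d)=\Psi(x,d(x))$; since $d=0$ makes every bracket in (\ref{subp}) vanish, $\theta(x)\le 0$, and because $d(x)$ is the unique optimal solution we have the basic equivalence $\theta(x)=0\iff d(x)=0$. I will also use the directional derivative $g_i'(x;d)$ of $g_i$, through $F_i'(x;d)=\langle\nabla f_i(x),d\rangle+g_i'(x;d)$, together with the convexity of $g_i$, which makes $t\mapsto\bigl(g_i(x+td)-g_i(x)\bigr)/t$ nondecreasing on $(0,\infty)$; in particular $g_i'(x;d)\le g_i(x+d)-g_i(x)$.

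For (i) it then suffices to show $\theta(x)=0\iff x$ is Pareto critical in the sense of Definition \ref{def3}, and both directions go by contraposition. If $\theta(x)<0$, the witness $d=d(x)$ makes every bracket in (\ref{subp}) negative; dropping the nonnegative term $\tfrac12\langle d,\nabla^2 f_i(x)d\rangle$ (here $\nabla^2 f_i(x)\succeq 0$ by convexity) and invoking $g_i'(x;d)\le g_i(x+d)-g_i(x)$ gives $F_i'(x;d)<0$ for every $i$, so $x$ is not Pareto critical. Conversely, if $x$ is not Pareto critical, pick $\bar d$ with $\max_i F_i'(x;\bar d)<0$; evaluating $\Psi(x,t\bar d)$, dividing by $t$, and letting $t\downarrow0$ yields the limit $\max_i F_i'(x;\bar d)<0$, so $\theta(x)\le\Psi(x,t\bar d)<0$ for all small $t>0$. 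This establishes (i).

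For (ii) the target is to show that $d^*$ minimizes $\Psi(x^*,\cdot)$, after which uniqueness forces $d^*=d(x^*)$. The direct attempt---passing to the limit in the optimality inequality $\Psi(x^k,d(x^k))\le\Psi(x^k,d)$---runs into the nonsmooth terms: since each $g_i$ is only proper, convex and lower semicontinuous, lower semicontinuity gives only $\liminf_k g_i(y^k)\ge g_i(y^*)$, which points the wrong way for bounding $\Psi(x^*,d^*)$ from above. Controlling these terms in the limit is the main obstacle.

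To get around it I would argue through the optimality system (\ref{subgrad}) rather than the value. Using compactness of $\Delta_m$, extract a subsequence along which $\lambda^k\to\lambda^*\in\Delta_m$; continuity of $\nabla f_i$ and $\nabla^2 f_i$ then sends the left-hand side of (\ref{subgrad}) to $-\nabla f_{\lambda^*}(x^*)-\nabla^2 f_{\lambda^*}(x^*)d^*$, and passing to the limit in the subgradient inequality for $\partial g_{\lambda^k}(x^k+d(x^k))$---applying lower semicontinuity on the side where it is favorable---places this vector in $\partial g_{\lambda^*}(x^*+d^*)$, i.e. $d^*$ is stationary for the $\lambda^*$-scalarized subproblem at $x^*$. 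Upgrading this stationarity to optimality for the full minimax requires also carrying the complementarity (support of $\lambda^k$ on the maximizing indices) to the limit, and it is exactly here that one needs the values $g_i(x^k+d(x^k))$ and $g_i(x^k)$ to converge---automatic where the $g_i$ are finite-valued, hence locally Lipschitz, and more generally in the interior of their domains. Granting this, $(\lambda^*,d^*)$ is the saddle point defining $d(x^*)$, so $d^*=d(x^*)$; independence of the extracted subsequence then gives the conclusion for the whole sequence.
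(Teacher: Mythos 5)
Note first that the paper itself contains no proof of this lemma: it is imported verbatim from Ansary \cite{A2023}, so your attempt can only be measured against the source's argument and on its own merits. Your part (i) is correct and is essentially the standard proof: the equivalence $d(x)=0\iff\theta(x)=0$ via uniqueness and $\Psi(x,0)=0$, the bound $F_i'(x;d)\le\langle\nabla f_i(x),d\rangle+g_i(x+d)-g_i(x)$ from monotone difference quotients, discarding the positive-semidefinite quadratic term in one direction, and the rescaling $d=t\bar d$ with $t\downarrow0$ in the other. (You quietly lean on uniqueness of $d(x)$, which strict convexity alone does not force --- $\nabla^2 f_i(x)$ may be merely positive semidefinite, as for $x\mapsto x^4$ at the origin --- but the paper asserts uniqueness in its setup, so taking it as given is consistent with the statement.)

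Part (ii) is where the genuine gap sits, and to your credit you name it yourself. Two observations. First, in the cited source the $g_i$ are real-valued convex, hence locally Lipschitz and continuous; under that standing assumption the direct route you set aside works verbatim: pass to the limit in $\Psi(x^k,d(x^k))\le\Psi(x^k,d)$ for each fixed $d$, using continuity of $\nabla f_i$, $\nabla^2 f_i$ and $g_i$, to get $\Psi(x^*,d^*)\le\Psi(x^*,d)$, and conclude $d^*=d(x^*)$ by uniqueness --- this is exactly how \cite{A2023} argues, so the ``main obstacle'' you describe dissolves in the setting the lemma actually comes from. Second, in the strictly lower-semicontinuous, extended-valued reading of this paper's hypotheses, your detour through (\ref{subgrad}) does not close the gap either: lower semicontinuity does let you pass the subgradient inequality to the limit and obtain $-\nabla f_{\lambda^*}(x^*)-\nabla^2 f_{\lambda^*}(x^*)d^*\in\partial g_{\lambda^*}(x^*+d^*)$, but that only makes $d^*$ stationary for \emph{one} scalarization; promoting this to optimality in the minimax needs the complementarity condition (support of $\lambda^*$ on the indices maximizing the bracket at $(x^*,d^*)$), and carrying that to the limit requires precisely the convergence of the values $g_i(x^k+d(x^k))$ and $g_i(x^k)$ that lower semicontinuity does not supply --- your ``Granting this'' is an assumption equivalent to the continuity under which the simpler direct argument already suffices. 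Net assessment: (i) is complete; (ii) is a correct plan only for continuous $g_i$ (where it is an unnecessary detour relative to the source's proof) and remains unproved in the generality you set out to handle.
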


\begin{lemma}[Theorem 3.2 of \cite{A2023}]\label{lem3.2}
	Suppose $f_{i}$ is strongly convex function with module $\mu>0$ for $i\in[m]$. Then 
	$$\theta(x)\leq-\frac{\mu}{2}\nm{d(x)}^{2}.$$
\end{lemma}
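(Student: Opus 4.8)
The plan is to exploit the strong convexity of the subproblem objective directly. Write $\psi_x(d) := \max_{i\in[m]}\phi_i(d)$, where $\phi_i(d) := \dual{\nabla f_i(x), d} + g_i(x+d) - g_i(x) + \frac{1}{2}\dual{d, \nabla^2 f_i(x)d}$, so that $d(x)$ is the minimizer of $\psi_x$ and $\theta(x) = \psi_x(d(x))$.

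First I would verify that each $\phi_i$ is $\mu$-strongly convex in $d$: the term $d \mapsto \dual{\nabla f_i(x), d}$ is affine, $d \mapsto g_i(x+d) - g_i(x)$ is convex since $g_i$ is convex, and the quadratic $\frac{1}{2}\dual{d, \nabla^2 f_i(x)d}$ has constant Hessian $\nabla^2 f_i(x) \succeq \mu I$ by the strong convexity hypothesis. Hence $\phi_i(d) - \frac{\mu}{2}\nm{d}^2$ is convex, i.e. $\phi_i$ is $\mu$-strongly convex. Since a pointwise maximum of $\mu$-strongly convex functions is again $\mu$-strongly convex, $\psi_x$ inherits $\mu$-strong convexity.

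Next, as $d(x)$ minimizes $\psi_x$ we have $0 \in \partial\psi_x(d(x))$, and the defining inequality of strong convexity gives $\psi_x(d) \geq \psi_x(d(x)) + \frac{\mu}{2}\nm{d - d(x)}^2$ for every $d$. Evaluating at $d = 0$ and using $\psi_x(0) = \max_{i\in[m]}\{0\} = 0$ yields $0 \geq \theta(x) + \frac{\mu}{2}\nm{d(x)}^2$, which is exactly the claimed bound after rearrangement.

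I expect no serious obstacle; the only points meriting care are the two standard facts that a pointwise maximum of $\mu$-strongly convex functions is $\mu$-strongly convex and that the strong-convexity inequality holds at a nonsmooth minimizer through $0 \in \partial\psi_x(d(x))$. An alternative route, closer to the tools already introduced, starts from the first-order optimality condition (\ref{subgrad}) with its multiplier $\lambda \in \Delta_m$: the subgradient inequality for the convex $g_\lambda$ bounds $g_\lambda(x+d(x)) - g_\lambda(x)$ above by $-\dual{\nabla f_\lambda(x) + \nabla^2 f_\lambda(x)d(x), d(x)}$, and substituting this into the saddle-point identity $\theta(x) = \dual{\nabla f_\lambda(x), d(x)} + g_\lambda(x+d(x)) - g_\lambda(x) + \frac{1}{2}\dual{d(x), \nabla^2 f_\lambda(x)d(x)}$ cancels the first-order terms and leaves $-\frac{1}{2}\dual{d(x), \nabla^2 f_\lambda(x)d(x)} \leq -\frac{\mu}{2}\nm{d(x)}^2$, since $\nabla^2 f_\lambda(x) \succeq \mu I$ for $\lambda \in \Delta_m$.
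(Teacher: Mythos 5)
Your proposal is correct, but note that there is nothing in this paper to compare it against: the lemma is stated as an imported result (Theorem 3.2 of \cite{A2023}) and the paper gives no proof of it, deferring entirely to Ansary's original article. Both of your routes are sound. The first is the cleanest: each $\phi_i$ is $\mu$-strongly convex because $\nabla^2 f_i(x)\succeq\mu I$ under the paper's definition of strong convexity, the pointwise maximum preserves $\mu$-strong convexity, and the growth inequality $\psi_x(d)\geq\psi_x(d(x))+\frac{\mu}{2}\nm{d-d(x)}^2$ at the minimizer, evaluated at $d=0$ with $\psi_x(0)=0$, gives the bound; incidentally, this last inequality can be obtained by a one-line limiting argument from the definition of strong convexity along the segment $[d(x),d]$, so you do not even need $0\in\partial\psi_x(d(x))$. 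Your second route is essentially the argument one finds in the cited source, and it reuses exactly the machinery this paper sets up in Section 3: the multiplier $\lambda\in\Delta_m$ from Sion's theorem and the optimality condition (\ref{subgrad}). The one step there that deserves an explicit justification rather than an assertion is the ``saddle-point identity'' $\theta(x)=\phi_\lambda(d(x))$: for an arbitrary $\lambda\in\Delta_m$ one only gets $\phi_\lambda(d(x))\leq\theta(x)$, which is the wrong direction, so you must use that $\lambda$ is the saddle multiplier (equivalently, complementary slackness: $\lambda_i>0$ only on indices attaining the max at $d(x)$), after which the cancellation of the first-order terms and $\nabla^2 f_\lambda(x)\succeq\mu I$ finish the proof as you describe.
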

\par The multiobjective Newton-type proximal gradient with line search is described as follows:
\begin{algorithm}  
	\caption{{\ttfamily{Newton-type\_proximal\_gradient\_method\_for\_MCOPs \cite{A2023}}}}\label{npgmo}
	\LinesNumbered  
	\KwData{$x^{0}\in\mathbb{R}^{n},~\epsilon>0,~\sigma,\gamma\in(0,1)$}
	\For{$k=0,...$}{Compute $d^{k}$ and $\theta^{k}$ by solving subproblem (\ref{subp}) with $x=x^{k}$\\
		\eIf{$\|d^{k}\|<\epsilon$}{ {\bf{return}} approximated Pareto critical point $x^{k}$ }{Compute the stepsize $t_{k}\in(0,1]$ as the maximum of $$T_{k}:=\{\gamma^{j}:j\in\mathbb{N},~F_{i}(x^{k}+t_{k}d^{k})-F_{i}(x^{k})\leq \gamma^{j}\sigma\theta^{k},~i\in[m]\}$$\\
			Update	$x^{k+1}:= x^{k}+t_{k}d^{k}$}}  
\end{algorithm}
\section{Convergence analysis of NPGMO}\label{sec4}
As we know, the Newton-type proximal gradient method for SOPs exhibits desirable convergence behavior of Newton-type methods for minimizing smooth functions, see \cite{LSS2014}. Naturally, this prompts the question: Does NPGMO exhibit similar desirable convergence properties to those of NMO when applied to minimizing smooth functions?

\subsection{Strong convergence}
As evident from Algorithm \ref{npgmo}, it terminates either with a Pareto critical point in a finite number of iterations or generates an infinite sequence of points. In the forthcoming analysis, we will assume that Algorithm \ref{npgmo} generates an infinite sequence of noncritical points. In \cite{A2023}, Ansary analyzed the global convergence of NPGMO.
\begin{theorem}[Theorem 4.1 of \cite{A2023}]\label{T1}
	Suppose $f_{i}$ is strongly convex with module $\mu>0$, its gradient is Lipschitz continuous with constant $L_{1}$ for all $i\in[m]$, and the level set $\mathcal{L}_{F}(x^{0}):=\{x:F(x)\preceq F(x^{0})\}$ is bounded. Let $\{x^{k}\}$ be the sequence produced by Algorithm \ref{npgmo}. Then every accumulation point of $\{x^{k}\}$ is a Pareto critical point of (\ref{MCOP}).
\end{theorem}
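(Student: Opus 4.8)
The plan is to follow the classical accumulation-point argument for Armijo-type descent methods, adapted to the composite/proximal-Newton setting; the technical core will be a uniform lower bound on the stepsizes $t_k$.

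First I would record monotonicity. Since $d=0$ is feasible in the subproblem, $\theta^k\le 0$, and the acceptance rule gives $F_i(x^{k+1})-F_i(x^k)\le t_k\sigma\theta^k\le 0$ for every $i\in[m]$. Thus $F(x^{k+1})\preceq F(x^k)\preceq F(x^0)$, so the whole sequence lies in $\mathcal{L}_F(x^0)$; this level set is bounded and, by continuity of $F$, closed, hence compact, so $\{x^k\}$ is bounded and each nonincreasing sequence $\{F_i(x^k)\}$ is bounded below and therefore convergent. Consequently $F_i(x^{k+1})-F_i(x^k)\to 0$, and squeezing $F_i(x^{k+1})-F_i(x^k)\le t_k\sigma\theta^k\le 0$ yields $t_k\theta^k\to 0$.

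The main obstacle is to rule out vanishing stepsizes, i.e. to show $t_k\ge\bar t:=\min\{1,\gamma(1-\sigma)\mu/L_1\}>0$. If $t_k<1$, then backtracking rejected $\hat t_k:=t_k/\gamma\le 1$, so Armijo fails at $\hat t_k$ for some index $i$: $F_i(x^k+\hat t_k d^k)-F_i(x^k)>\hat t_k\sigma\theta^k$. I would bound the left side from above by splitting $F_i=f_i+g_i$: the $L_1$-Lipschitz gradient of $f_i$ gives the descent estimate $f_i(x^k+\hat t_k d^k)-f_i(x^k)\le\hat t_k\langle\nabla f_i(x^k),d^k\rangle+\tfrac{L_1\hat t_k^2}{2}\|d^k\|^2$, while convexity of $g_i$ (using $\hat t_k\in(0,1]$) gives $g_i(x^k+\hat t_k d^k)-g_i(x^k)\le\hat t_k[g_i(x^k+d^k)-g_i(x^k)]$. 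Substituting, dividing by $\hat t_k$, and using that each term inside the max defining $\theta^k$ is dominated by $\theta^k$ (so that $\langle\nabla f_i(x^k),d^k\rangle+g_i(x^k+d^k)-g_i(x^k)\le\theta^k-\tfrac12\langle d^k,\nabla^2 f_i(x^k)d^k\rangle\le\theta^k$), the first-order terms collapse and leave $(1-\sigma)(-\theta^k)<\tfrac{L_1\hat t_k}{2}\|d^k\|^2$. Invoking Lemma~\ref{lem3.2} in the form $-\theta^k\ge\tfrac{\mu}{2}\|d^k\|^2$ and cancelling $\|d^k\|^2>0$ (the iterates are noncritical) yields $\hat t_k>(1-\sigma)\mu/L_1$, hence the claimed lower bound on $t_k$.

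Finally, combining $t_k\ge\bar t$ with $t_k\theta^k\to 0$ forces $\theta^k\to 0$, and Lemma~\ref{lem3.2} upgrades this to $\|d^k\|\to 0$ for the entire sequence. For any accumulation point $x^*$, choosing a subsequence $x^k\to x^*$ we have $d(x^k)\to 0$; the continuity of the direction map together with the characterization $d(x)=0\Leftrightarrow x$ Pareto critical (Lemma 3.2 and Theorem 3.1 of \cite{A2023}) then gives $d(x^*)=0$, so $x^*$ is Pareto critical. I expect the delicate point to be the stepsize estimate — specifically handling the nonsmooth terms $g_i$ via the convexity inequality and recognizing that the per-index first-order quantities are dominated by $\theta^k$ — since this is precisely where the composite structure enters and where a purely smooth argument would not suffice.
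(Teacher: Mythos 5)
Your proposal is correct, but note that there is nothing internal to compare it against: this paper states Theorem~\ref{T1} as an imported result (Theorem 4.1 of \cite{A2023}) and gives no proof of it, so your argument is effectively a reconstruction of Ansary's analysis rather than an alternative to anything in this text. Your route is the classical one for Armijo-type multiobjective descent methods, and every step checks out: monotonicity and compactness of the level set give convergence of each $\{F_i(x^k)\}$ and hence $t_k\theta^k\to 0$; the uniform stepsize bound is the technical heart, and your handling of the composite structure is exactly right --- the descent lemma for $f_i$ (this is where the hypothesis $L_1$ enters, and indeed the only place it is needed), the convexity inequality $g_i(x^k+\hat t_k d^k)-g_i(x^k)\le \hat t_k\bigl[g_i(x^k+d^k)-g_i(x^k)\bigr]$ valid because $\hat t_k\le 1$, and the domination of the per-index first-order quantity by $\theta^k$ after discarding the nonnegative quadratic term; combined with Lemma~\ref{lem3.2} this yields $\hat t_k>(1-\sigma)\mu/L_1$, hence $t_k\ge\min\{1,\gamma(1-\sigma)\mu/L_1\}>0$, so $\theta^k\to 0$ and $d^k\to 0$, and the lemma on $d(\cdot)$ (applied along the convergent subsequence, with strong convexity supplying the strict convexity it requires) closes the argument. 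An alternative, Fliege--Svaiter-style route would argue by contradiction at a putative noncritical accumulation point, forcing $t_k\to 0$ along a subsequence and passing to the limit in the failed Armijo test; that variant avoids explicit constants but needs a continuity argument for $\theta(\cdot)$, whereas your explicit bound is what the $L_1$ assumption is tailored for. Two cosmetic repairs: closedness of $\mathcal{L}_F(x^0)$ follows from lower semicontinuity of the $g_i$ (each $F_i$ is only lsc, not continuous), which also gives boundedness below of $F_i$ on the compact level set; and the cancellation of $\|d^k\|^2$ uses the standing assumption, stated just before Theorem~\ref{T1} in the paper, that the algorithm generates an infinite sequence of noncritical points.
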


We can derive the strong convergence of NPGMO.
\begin{theorem}\label{T2}
	Suppose $f_{i}$ is strongly convex with module $\mu>0$ for all $i\in[m]$. Let $\{x^{k}\}$ be the sequence produced by Algorithm \ref{npgmo}. Then $\{x^{k}\}$ converges to some Pareto point $x^{*}$.
\end{theorem}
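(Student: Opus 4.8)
The plan is to combine a monotone-decrease argument with a strict-convexity ``midpoint'' argument that pins down a unique accumulation point. First I would exploit the line search. For a noncritical iterate we have $d^{k}\neq 0$, so Lemma \ref{lem3.2} gives $\theta^{k}\le-\frac{\mu}{2}\|d^{k}\|^{2}<0$, and the Armijo test $F_{i}(x^{k+1})-F_{i}(x^{k})\le t_{k}\sigma\theta^{k}$ with $t_{k}>0$ forces $F_{i}(x^{k+1})<F_{i}(x^{k})$ for every $i\in[m]$. Hence $\{F(x^{k})\}$ is strictly decreasing in the order $\preceq$, and every $x^{k}$ lies in the level set $\mathcal{L}_{F}(x^{0})$.

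Second, I would obtain boundedness of $\{x^{k}\}$ for free from strong convexity. Since the paper's definition of $\mu$-strong convexity already bundles in twice continuous differentiability with $\mu I\preceq\nabla^{2}f_{i}$, each $f_{i}$ has quadratic growth; as the proper convex lower semicontinuous $g_{i}$ is minorized by an affine function, each $F_{i}=f_{i}+g_{i}$ is coercive. Thus $\{x:F_{i}(x)\le F_{i}(x^{0})\}$ is bounded, so $\mathcal{L}_{F}(x^{0})$ is bounded, and consequently $\{x^{k}\}$ is bounded and admits accumulation points, while the monotone and bounded-below sequence $\{F(x^{k})\}$ converges to some $F^{*}$.

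Third, I would show every accumulation point is Pareto critical. The boundedness of the level set together with the continuity of $\nabla^{2}f_{i}$ on this compact set supplies the (local) Lipschitz control on $\nabla f_{i}$ used in Theorem \ref{T1}; invoking that result, every accumulation point $\bar{x}$ is Pareto critical. Since strong convexity of $f_{i}$ makes each $F_{i}$ strictly convex, item (iii) of the earlier lemma (Theorem 3.1 of \cite{FD2009}) upgrades $\bar{x}$ to a Pareto solution, and by continuity $F(\bar{x})=F^{*}$.

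Finally, the uniqueness of the accumulation point is where strong convexity pays off. Suppose $\bar{x}\neq\bar{y}$ were two accumulation points, both with $F$-value $F^{*}$. Taking the midpoint $z=\tfrac{1}{2}(\bar{x}+\bar{y})$ and using strict convexity of each $F_{i}$ yields $F_{i}(z)<\tfrac{1}{2}F_{i}(\bar{x})+\tfrac{1}{2}F_{i}(\bar{y})=F_{i}^{*}$ for all $i\in[m]$, that is $F(z)\prec F(\bar{x})$, contradicting that $\bar{x}$ is a (weakly) Pareto solution. Hence the accumulation point is unique, and a bounded sequence with a single accumulation point converges to it; the limit $x^{*}$ is the desired Pareto point. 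The step I expect to be the main obstacle is the third one: making precise that the global Lipschitz hypothesis of Theorem \ref{T1} may be replaced by the local Lipschitz bound that strong convexity furnishes on the compact level set (since all iterates and accumulation points stay there). Once that is granted, the monotone-decrease and midpoint steps are routine.
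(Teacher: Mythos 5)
Your proposal is correct and mirrors the paper's own proof in structure: compactness of the level set $\mathcal{L}_{F}(x^{0})$ from strong convexity, every accumulation point being Pareto critical and hence Pareto by strict convexity, uniqueness of the accumulation point via the strict-convexity convex-combination contradiction (the paper uses $F(sx^{*}+(1-s)x^{*}_{1})\prec F(x^{*})$, you use the midpoint $s=\tfrac{1}{2}$), and convergence of the bounded sequence to that unique point. The only divergence is the criticality step, where the paper adapts the argument of Theorem 4.2 of \cite{TFY2019} while you invoke Theorem \ref{T1} and patch its global Lipschitz-gradient hypothesis by localizing to the compact level set --- a legitimate repair (boundedness of $\|d^{k}\|$ there, obtainable from $\theta^{k}\leq-\tfrac{\mu}{2}\|d^{k}\|^{2}$ together with affine minorants of $g_{i}$, keeps all trial points in a compact enlargement on which $\nabla f_{i}$ is Lipschitz), and you rightly flag it as the one step requiring care.
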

\begin{proof}
	Since $f_i$ is strongly convex and $g_{i}$ is convex for $i\in[m]$, this, together with the continuity of $f_{i}$ and lower semi-continuity of $g_{i}$ for $i\in[m]$, yields the level set $\mathcal{L}_{F}(x^{0})\subset\{x:F_{i}(x)\preceq F_{i}(x^{0})\}$ is compact, and any Pareto critical point is a Pareto point. Moreover, we can infer $\{x^{k}\}\subset\mathcal{L}_{F}(x^{0})$ due to the fact that $\{F(x^{k})\}$ is decreasing. With the compactness of $\mathcal{L}_{F}(x^{0})$, it follows that $\{x^{k}\}$ has an accumulation point $x^{*}$ and $\lim\limits_{k\rightarrow\infty}F(x^{k})=F(x^{*})$ . By applying a similar argument as presented in the proof of \cite[Theorem 4.2]{TFY2019}, it can be concluded that $x^{*}$ is a Pareto critical point (Pareto point). Next, we proceed to prove the uniqueness of $x^{*}$. Suppose the contrary that there exists another distinct accumulation point $x^{*}_{1}$. By the strong convexity of $F_{i}$ for $i\in[m]$, the following inequality holds: $$F(sx^{*}+(1-s)x^{*}_{1})\prec sF(x^{*})+(1-s)F(x^{*}_{1})=F(x^{*}),$$ 
	where the equality follows from the convergence of $\{F(x^{k})\}$.
	However, this contradicts the fact that $x^{*}$ is a Pareto point. The uniqueness of accumulation point of $\{x^{k}\}$ implies that $\{x^{k}\}$ converges to $x^{*}$. 
\end{proof}

\subsection{Quadratic termination}
In this section, we will delve into the analysis of the quadratic termination property of Algorithm \ref{npgmo}. Prior to presenting the outcome, we lay the foundation by establishing the subsequent modified fundamental inequality.
\begin{proposition}[modified fundamental inequality]
	Suppose $f_{i}$ is strictly convex for all $i\in[m]$. Let $\{x^{k}\}$ be the sequence produced by Algorithm \ref{npgmo}. If $t_k=1$, then there exists $\lambda^{k}\in\Delta_{m}$ such that
	\begin{equation}\label{ineq}
		\begin{aligned}
			&~~~~F_{\lambda^{k}}(x^{k+1}) - F_{\lambda^{k}}(x)\\
			&\leq\frac{1}{2}\nm{x^{k}-x}^{2}_{\nabla^{2}f_{\lambda^{k}}(x^{k})}-\frac{1}{2}\nm{x^{k+1}-x^{k}}^{2}_{\nabla^{2}f_{\lambda^{k}}(x^{k})}-\frac{1}{2}\nm{x^{k+1}-x}^{2}_{\nabla^{2}f_{\lambda^{k}}(x^{k})}\\
			&~~~~+\dual{x^{k+1}-x^{k},\int_{0}^{1}\int_{0}^{1}\nabla^{2}f_{\lambda^{k}}(x^{k}+st(x^{k+1}-x^{k}))ds(t(x^{k+1}-x^{k}))dt}\\
			&~~~~-\dual{x-x^{k},\int_{0}^{1}\int_{0}^{1}\nabla^{2}f_{\lambda^{k}}(x^{k}+st(x-x^{k}))ds(t(x-x^{k}))dt}.
		\end{aligned}
	\end{equation}
\end{proposition}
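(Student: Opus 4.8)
The plan is to exploit the additive structure $F_{\lambda^{k}}=f_{\lambda^{k}}+g_{\lambda^{k}}$, handling the smooth and nonsmooth parts separately; here $\lambda^{k}\in\Delta_{m}$ is the weight vector furnished by Sion's minimax theorem, for which the optimality condition (\ref{subgrad}) holds, and I use $t_{k}=1$ so that $x^{k+1}-x^{k}=d^{k}$. Splitting
$$F_{\lambda^{k}}(x^{k+1})-F_{\lambda^{k}}(x)=\big[f_{\lambda^{k}}(x^{k+1})-f_{\lambda^{k}}(x)\big]+\big[g_{\lambda^{k}}(x^{k+1})-g_{\lambda^{k}}(x)\big],$$
I would bound each bracket and then recombine.

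For the nonsmooth bracket, I would apply the subgradient inequality for the convex function $g_{\lambda^{k}}$ at the point $x^{k+1}=x^{k}+d^{k}$, using the subgradient $-\nabla f_{\lambda^{k}}(x^{k})-\nabla^{2}f_{\lambda^{k}}(x^{k})d^{k}\in\partial g_{\lambda^{k}}(x^{k+1})$ supplied by (\ref{subgrad}). This yields
$$g_{\lambda^{k}}(x^{k+1})-g_{\lambda^{k}}(x)\leq\dual{-\nabla f_{\lambda^{k}}(x^{k})-\nabla^{2}f_{\lambda^{k}}(x^{k})(x^{k+1}-x^{k}),\,x^{k+1}-x}.$$
For the smooth bracket, I would use the exact second-order Taylor expansion with integral remainder about $x^{k}$: writing $R(y):=\dual{y-x^{k},\int_{0}^{1}\int_{0}^{1}\nabla^{2}f_{\lambda^{k}}(x^{k}+st(y-x^{k}))\,ds\,(t(y-x^{k}))\,dt}$, one has $f_{\lambda^{k}}(y)=f_{\lambda^{k}}(x^{k})+\dual{\nabla f_{\lambda^{k}}(x^{k}),y-x^{k}}+R(y)$, so subtracting the cases $y=x^{k+1}$ and $y=x$ gives $f_{\lambda^{k}}(x^{k+1})-f_{\lambda^{k}}(x)=\dual{\nabla f_{\lambda^{k}}(x^{k}),x^{k+1}-x}+R(x^{k+1})-R(x)$. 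Adding the two estimates cancels the $\nabla f_{\lambda^{k}}(x^{k})$ terms and leaves
$$F_{\lambda^{k}}(x^{k+1})-F_{\lambda^{k}}(x)\leq-\dual{\nabla^{2}f_{\lambda^{k}}(x^{k})(x^{k+1}-x^{k}),\,x^{k+1}-x}+R(x^{k+1})-R(x).$$

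The last step is purely algebraic. Abbreviating $H_{k}:=\nabla^{2}f_{\lambda^{k}}(x^{k})$ and setting $u=x^{k+1}-x^{k}$, $v=x-x^{k}$, I would expand $\nm{u-v}^{2}_{H_{k}}=\nm{u}^{2}_{H_{k}}-2\dual{H_{k}u,v}+\nm{v}^{2}_{H_{k}}$ to obtain the polarization identity $-\dual{H_{k}(x^{k+1}-x^{k}),x^{k+1}-x}=\frac{1}{2}\nm{x^{k}-x}^{2}_{H_{k}}-\frac{1}{2}\nm{x^{k+1}-x^{k}}^{2}_{H_{k}}-\frac{1}{2}\nm{x^{k+1}-x}^{2}_{H_{k}}$. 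Since $R(x^{k+1})$ and $-R(x)$ are precisely the two iterated-integral terms appearing in (\ref{ineq}), substituting finishes the argument. I do not anticipate a genuine obstacle: the inequality is driven entirely by the single subgradient estimate, and the only delicate points are bookkeeping — choosing the integral remainder so that $R$ reproduces the stated double integrals verbatim (equivalently, checking via Fubini that it agrees with the usual $\int_{0}^{1}(1-\tau)\nabla^{2}f_{\lambda^{k}}(\cdot)\,d\tau$ form), and tracking signs in the polarization identity.
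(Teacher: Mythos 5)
Your proposal is correct and follows essentially the same route as the paper: the subgradient inequality derived from (\ref{subgrad}) with $t_{k}=1$, the exact second-order Taylor expansion with the double-integral remainder $R(\cdot)$, and the three-points (polarization) identity for $\nabla^{2}f_{\lambda^{k}}(x^{k})$, which the paper invokes via \cite{CT1993} and you verify by direct expansion. The only cosmetic difference is that you expand $f_{\lambda^{k}}$ directly rather than expanding each $F_{i}$ and then forming the $\lambda^{k}$-combination, which is equivalent by linearity.
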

\begin{proof}
	From the twice continuity of $f_{i}$, we can deduce that
	\begin{small}
	\begin{align*}
		&~~~~F_{i}(x^{k+1}) - F_{i}(x)\\
		&= (f_{i}(x^{k+1})-f_{i}(x^{k})) - (f_{i}(x)-f_{i}(x^{k}))+g_{i}(x^{k+1})-g_{i}(x)\\
		&=\dual{\nabla f_{i}(x^{k}),x^{k+1}-x^{k}}+\dual{x^{k+1}-x^{k},\int_{0}^{1}\int_{0}^{1}\nabla^{2}f_{i}(x^{k}+st(x^{k+1}-x^{k}))ds(t(x^{k+1}-x^{k}))dt}\\
		&~~~~+\dual{\nabla f_{i}(x^{k}),x^{k}-x}-\dual{x-x^{k},\int_{0}^{1}\int_{0}^{1}\nabla^{2}f_{i}(x^{k}+st(x-x^{k}))ds(t(x-x^{k}))dt}+g_{i}(x^{k+1})-g_{i}(x)\\
		&=\dual{\nabla f_{i}(x^{k}),x^{k+1}-x}+\dual{x^{k+1}-x^{k},\int_{0}^{1}\int_{0}^{1}\nabla^{2}f_{i}(x^{k}+st(x^{k+1}-x^{k}))ds(t(x^{k+1}-x^{k}))dt}\\
		&~~~~-\dual{x-x^{k},\int_{0}^{1}\int_{0}^{1}\nabla^{2}f_{i}(x^{k}+st(x-x^{k}))ds(t(x-x^{k}))dt}+g_{i}(x^{k+1})-g_{i}(x).
	\end{align*}
\end{small}
	On the other hand, from (\ref{subgrad}), we have $-\nabla f_{\lambda^{k}}(x^{k})-\nabla^{2}f_{\lambda^{k}}(x^{k})d^{k}\in\partial g_{\lambda^{k}}(x^{k}+d^{k}),~\lambda^{k}\in\Delta_{m}$. This, together with the fact that $t_k=1$, implies
	\begin{align*}
		g_{\lambda^{k}}(x^{k+1})-g_{\lambda^{k}}(x)&\leq\dual{-\nabla f_{\lambda^{k}}(x^{k})-\nabla^{2}f_{\lambda^{k}}(x^{k})d^{k},x^{k+1}-x}\\
		&=\dual{-\nabla f_{\lambda^{k}}(x^{k})-\nabla^{2}f_{\lambda^{k}}(x^{k})(x^{k+1}-x^{k}),x^{k+1}-x}.
	\end{align*}
	We use the last two relations to get
	\begin{small}
	\begin{align*}
		&~~~~F_{\lambda^{k}}(x^{k+1}) - F_{\lambda^{k}}(x)\\
		&\leq\dual{\nabla^{2}f_{\lambda^{k}}(x^{k})(x^{k}-x^{k+1}),x^{k+1}-x}+\dual{x^{k+1}-x^{k},\int_{0}^{1}\int_{0}^{1}\nabla^{2}f_{\lambda^{k}}(x^{k}+st(x^{k+1}-x^{k}))ds(t(x^{k+1}-x^{k}))dt}\\
		&~~~~-\dual{x-x^{k},\int_{0}^{1}\int_{0}^{1}\nabla^{2}f_{\lambda^{k}}(x^{k}+st(x-x^{k}))ds(t(x-x^{k}))dt}\\
		&=\frac{1}{2}\nm{x^{k}-x}^{2}_{\nabla^{2}f_{\lambda^{k}}(x^{k})}-\frac{1}{2}\nm{x^{k+1}-x^{k}}^{2}_{\nabla^{2}f_{\lambda^{k}}(x^{k})}-\frac{1}{2}\nm{x^{k+1}-x}^{2}_{\nabla^{2}f_{\lambda^{k}}(x^{k})}\\
		&~~~~+\dual{x^{k+1}-x^{k},\int_{0}^{1}\int_{0}^{1}\nabla^{2}f_{\lambda^{k}}(x^{k}+st(x^{k+1}-x^{k}))ds(t(x^{k+1}-x^{k}))dt}\\
		&~~~~-\dual{x-x^{k},\int_{0}^{1}\int_{0}^{1}\nabla^{2}f_{\lambda^{k}}(x^{k}+st(x-x^{k}))ds(t(x-x^{k}))dt},
	\end{align*}
\end{small}
	where the equality comes from the three points lemma \cite{CT1993}. This completes the proof.
\end{proof}

\begin{theorem}
	Suppose $f_{i}$ is strongly convex quadratic problem for $i\in[m]$. Let $\{x^{k}\}$ be the sequence produced by Algorithm \ref{npgmo}. Then $x^{k+1}=x^{*}$, i.e.,  Algorithm \ref{npgmo} finds an exact Pareto point of (\ref{MCOP}) in one iteration.
\end{theorem}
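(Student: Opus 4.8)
The plan is to exploit the modified fundamental inequality (\ref{ineq}) just established, which is available whenever the unit step is accepted, and to show that for quadratic $f_{i}$ it collapses into a statement forcing $x^{k+1}$ to be the global minimizer of the scalarization $F_{\lambda^{k}}$. The argument therefore splits into three stages: first secure $t_{k}=1$, then simplify (\ref{ineq}) using the constancy of the Hessian, and finally translate the resulting inequality into Pareto optimality.

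First I would verify that the backtracking line search accepts $t_{k}=1$, since otherwise (\ref{ineq}) does not apply. Because each $f_{i}$ is quadratic, its second-order Taylor expansion is exact, so for every $d$ one has $F_{i}(x^{k}+d)-F_{i}(x^{k})=\langle\nabla f_{i}(x^{k}),d\rangle+\tfrac{1}{2}\langle d,\nabla^{2}f_{i}(x^{k})d\rangle+g_{i}(x^{k}+d)-g_{i}(x^{k})$, which is exactly the $i$-th term inside subproblem (\ref{subp}). Taking the maximum over $i$ at the minimizer $d=d^{k}$ gives $\max_{i\in[m]}[F_{i}(x^{k+1})-F_{i}(x^{k})]=\theta^{k}$. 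Since $x^{k}$ is noncritical, Lemma \ref{lem3.2} yields $\theta^{k}\leq-\tfrac{\mu}{2}\nm{d^{k}}^{2}<0$, and because $\sigma\in(0,1)$ we have $\theta^{k}\leq\sigma\theta^{k}$. Hence $F_{i}(x^{k}+d^{k})-F_{i}(x^{k})\leq\theta^{k}\leq\sigma\theta^{k}$ for every $i$, so the Armijo-type test holds at $j=0$ and $t_{k}=1$.

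With $t_{k}=1$ in hand, I would substitute into (\ref{ineq}). The decisive simplification is that $\nabla^{2}f_{\lambda^{k}}(x^{k})$ is a \emph{constant} matrix when the $f_{i}$ are quadratic, so each double integral factors the Hessian out and reduces to $\int_{0}^{1}t\,dt=\tfrac12$ times the relevant displacement. The two inner-product terms then become $\tfrac12\nm{x^{k+1}-x^{k}}^{2}_{\nabla^{2}f_{\lambda^{k}}(x^{k})}$ and $-\tfrac12\nm{x-x^{k}}^{2}_{\nabla^{2}f_{\lambda^{k}}(x^{k})}$, which cancel against the matching quadratic terms already present on the right-hand side. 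After these cancellations the inequality collapses to
$$F_{\lambda^{k}}(x^{k+1})-F_{\lambda^{k}}(x)\leq-\frac{1}{2}\nm{x^{k+1}-x}^{2}_{\nabla^{2}f_{\lambda^{k}}(x^{k})}\leq0,\qquad\forall x\in\mathbb{R}^{n},$$
using $\nabla^{2}f_{\lambda^{k}}(x^{k})\succeq\mu I\succ0$.

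Finally I would read off the conclusion. The display shows $x^{k+1}$ globally minimizes the strongly convex scalarization $F_{\lambda^{k}}$ with $\lambda^{k}\in\Delta_{m}$, and strong convexity makes this minimizer unique. A standard weighted-sum argument then gives Pareto optimality: if some $y$ satisfied $F(y)\preceq F(x^{k+1})$ with $F(y)\neq F(x^{k+1})$, then $F_{\lambda^{k}}(y)\leq F_{\lambda^{k}}(x^{k+1})$ would force $y=x^{k+1}$ by uniqueness, contradicting $F(y)\neq F(x^{k+1})$. Thus $x^{k+1}$ is a Pareto point; since it is Pareto critical we have $d(x^{k+1})=0$, so the iteration is stationary and $x^{k+1}$ coincides with the limit $x^{*}$ of Theorem \ref{T2}. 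I expect the main obstacle to be the constant-Hessian simplification in the third stage — checking that precisely the right terms cancel so that only $-\tfrac12\nm{x^{k+1}-x}^{2}$ survives — together with the preliminary verification that the unit step is always taken, without which inequality (\ref{ineq}) is unavailable.
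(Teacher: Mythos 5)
Your proposal is correct, and its first two stages coincide with the paper's proof: you verify $t_{k}=1$ by exactness of the quadratic Taylor expansion (so that $\max_{i}[F_{i}(x^{k}+d^{k})-F_{i}(x^{k})]=\theta^{k}\leq\sigma\theta^{k}$), and you collapse the modified fundamental inequality (\ref{ineq}) via the constant Hessian — the integrals reduce exactly as you predict, leaving $F_{\lambda^{k}}(x^{k+1})-F_{\lambda^{k}}(x)\leq-\tfrac{1}{2}\nm{x^{k+1}-x}^{2}_{A_{\lambda^{k}}}$, which is precisely the paper's computation. Where you genuinely diverge is the endgame. The paper substitutes the specific point $x=x^{*}$ (the limit furnished by Theorem \ref{T2}), uses $F(x^{*})\preceq F(x^{k+1})$ — which comes from the monotone decrease of $\{F(x^{k})\}$ together with convergence — to get $F_{\lambda^{k}}(x^{k+1})-F_{\lambda^{k}}(x^{*})\geq0$, and squeezes $\tfrac{1}{2}\nm{x^{k+1}-x^{*}}^{2}_{A_{\lambda^{k}}}\leq0$ to conclude $x^{k+1}=x^{*}$ directly. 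You instead read off from the collapsed inequality, valid for \emph{all} $x$, that $x^{k+1}$ is the unique global minimizer of the scalarization $F_{\lambda^{k}}$, deduce Pareto optimality by the weighted-sum argument (note your uniqueness argument survives zero weights $\lambda^{k}_{i}=0$ because uniqueness is at the level of points in $\mathbb{R}^{n}$, not objective values), and then identify $x^{k+1}$ with $x^{*}$ via criticality and stationarity of the subsequent iterates. The paper's route is shorter but leans on Theorem \ref{T2} both for existence of $x^{*}$ and for its Pareto property; your route proves Pareto optimality of $x^{k+1}$ self-containedly from the inequality itself and invokes Theorem \ref{T2} essentially only to name the limit, which also quietly resolves the ambiguity in the theorem statement about what $x^{*}$ denotes. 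One cosmetic caveat: strictly speaking, once $d(x^{k+1})=0$ the algorithm's stopping test $\|d^{k+1}\|<\epsilon$ triggers and it returns $x^{k+1}$ rather than iterating, but this only strengthens your conclusion that the sequence is stationary at $x^{k+1}=x^{*}$; alternatively you could avoid stationarity altogether by specializing your inequality at $x=x^{*}$, which reproduces the paper's squeeze.
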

\begin{proof}
	First, we aim to establish that $t_{k}=1$ for all $k$. 
	Since $f_{i}$ is strongly convex quadratic problem for $i\in[m]$, there exists a positive definite matrix $A_{i}$ such that $\nabla^{2}f_{i}(x)=A_{i}$ for all $x\in\mathbb{R}^{n}$. This leads us to the conclusion that
	\begin{align*}
		F_{i}(x^{k}+d^{k})-F_{i}(x^{k})&=\dual{\nabla f_{i}(x^{k}),d^{k}}+g_{i}(x^{k}+d^{k})-g_{i}(x^{k})+\frac{1}{2}\dual{d^{k},A_{i}d^{k}}\\
		&\leq\max\limits_{i\in[m]}\left\{\dual{\nabla f_{i}(x^{k}),d^{k}}+g_{i}(x^{k}+d^{k})-g_{i}(x^{k})+\frac{1}{2}\dual{d^{k},A_{i}d^{k}}\right\}\\
		&=\theta^{k}\\
		&\leq\sigma\theta^{k}.
	\end{align*}
	Thus, $t_{k}=1$ for all $k$. For all $x\in\mathbb{R}^{n}$, we use relation (\ref{ineq}) to get
	\begin{align*}
		F_{\lambda^{k}}(x^{k+1}) - F_{\lambda^{k}}(x)&\leq\frac{1}{2}\nm{x^{k}-x}^{2}_{A_{\lambda^{k}}}-\frac{1}{2}\nm{x^{k+1}-x^{k}}^{2}_{A_{\lambda^{k}}}-\frac{1}{2}\nm{x^{k+1}-x}^{2}_{A_{\lambda^{k}}}\\
		&~~~~+\dual{x^{k+1}-x^{k},\int_{0}^{1}A_{\lambda^{k}}t(x^{k+1}-x^{k})dt}-\dual{x-x^{k},\int_{0}^{1}A_{\lambda^{k}}t(x-x^{k})dt}\\
		&=-\frac{1}{2}\nm{x^{k+1}-x}^{2}_{A_{\lambda^{k}}},
	\end{align*}
	where $A_{\lambda^{k}}:=\sum\limits_{i\in[m]}\lambda^{k}_{i}A_{i}$. Applying Theorem \ref{T2}, we can assert the existence of $x^{*}$ such that $F(x^{*})\preceq F(x^{k})$ for all $k$. Substituting $x=x^{*}$ into above inequality, we have
	$$F_{\lambda^{k}}(x^{k+1}) - F_{\lambda^{k}}(x^{*})\leq-\frac{1}{2}\nm{x^{k+1}-x^{*}}^{2}_{A_{\lambda^{k}}}.$$
	Combining this with the fact that $F(x^{*})\preceq F(x^{k})$ leads to the following inequality
	$$\frac{1}{2}\nm{x^{k+1}-x^{*}}^{2}_{A_{\lambda^{k}}}\leq0.$$
	Considering that $A_{i}$ is a positive definite matrix for $i\in[m]$, it follows that $x^{k+1}=x^{*}$. This completes the proof.
\end{proof}
\par By setting $g_{i}(x)=0$, for $i\in[m]$, the quadratic termination property also applies to the NMO as presented in \cite{FD2009}.
\begin{corollary}
	Suppose $F_{i}$ is strongly convex quadratic problem for $i\in[m]$. Let $\{x^{k}\}$ be the sequence produced by the NMO. Then $x^{k+1}=x^{*}$, i.e.,  the NMO finds an exact Pareto point in one iteration.
\end{corollary}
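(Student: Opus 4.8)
The plan is to recognize that the NMO of \cite{FD2009} is exactly the special case of Algorithm \ref{npgmo} obtained by setting $g_{i}\equiv0$ for all $i\in[m]$, and then to invoke the preceding theorem. Under this identification one has $F_{i}=f_{i}$, so the hypothesis that each $F_{i}$ is a strongly convex quadratic is precisely the hypothesis that each $f_{i}$ is a strongly convex quadratic demanded by that theorem; no genuinely new analysis is needed, only a verification that the machinery degenerates correctly.

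First I would check that the governing objects specialize consistently when $g_{i}\equiv0$. The subproblem (\ref{subp}) loses its nonsmooth terms and reduces to the pure multiobjective Newton model $\min_{d}\max_{i}\{\dual{\nabla f_{i}(x),d}+\frac12\dual{d,\nabla^{2}f_{i}(x)d}\}$, which is the direction-finding subproblem of the NMO, while $\partial g_{\lambda^{k}}$ collapses to $\{0\}$ so that the inclusion (\ref{subgrad}) becomes the Newton equation $\nabla f_{\lambda^{k}}(x^{k})+\nabla^{2}f_{\lambda^{k}}(x^{k})d^{k}=0$. I would then confirm that the modified fundamental inequality (\ref{ineq}) survives: its derivation rests only on the twice continuity of $f_{i}$ together with the estimate $g_{\lambda^{k}}(x^{k+1})-g_{\lambda^{k}}(x)\le\dual{-\nabla f_{\lambda^{k}}(x^{k})-\nabla^{2}f_{\lambda^{k}}(x^{k})d^{k},x^{k+1}-x}$, and when $g_{\lambda^{k}}\equiv0$ both sides of this estimate vanish (the right-hand side by the Newton equation just obtained), so it holds with equality and leaves (\ref{ineq}) intact.

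With these checks in place, the conclusion $x^{k+1}=x^{*}$ follows by repeating the argument of the preceding theorem verbatim: the constant Hessians $\nabla^{2}f_{i}\equiv A_{i}$ force acceptance of the unit step $t_{k}=1$, the fundamental inequality then simplifies to $F_{\lambda^{k}}(x^{k+1})-F_{\lambda^{k}}(x)\le-\frac12\nm{x^{k+1}-x}^{2}_{A_{\lambda^{k}}}$, and substituting the limit point $x^{*}$ furnished by Theorem \ref{T2} (which applies since $f_{i}=F_{i}$ is strongly convex, and which gives $F(x^{*})\preceq F(x^{k})$) yields $\nm{x^{k+1}-x^{*}}^{2}_{A_{\lambda^{k}}}\le0$, whence $x^{k+1}=x^{*}$ by positive definiteness of $A_{\lambda^{k}}$. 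I do not expect a real analytical obstacle here; the only point requiring care is bookkeeping, namely confirming that every appearance of the subgradient of $g$ in the theorem's proof degenerates to the smooth Newton setting, which the verification above secures.
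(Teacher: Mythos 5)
Your proposal is correct and matches the paper's approach exactly: the paper proves this corollary with the single remark that setting $g_{i}\equiv 0$ reduces the NMO to Algorithm \ref{npgmo} and the preceding quadratic-termination theorem then applies, which is precisely your argument (you merely spell out the degeneration checks the paper leaves implicit). No gap.
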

\subsection{Local superlinear convergence}
Next, we give sufficient conditions for local superlinear convergence.
\begin{theorem}\label{T4}
	Suppose $f_{i}$ is strongly convex with module $\mu>0$, and its Hessian is continuous for $i\in[m]$. Let $\{x^{k}\}$ be the sequence produced by Algorithm \ref{npgmo}. Then, for any $0<\epsilon\leq(1-\sigma)\mu$, there exists $K_{\epsilon}>0$ such that $$\nm{x^{k+1}-x^{*}}\leq\sqrt{\frac{\epsilon(1+\tau_{k}^{2})}{\mu}}\nm{x^{k}-x^{*}}$$ holds for all $k\geq K_{\epsilon}$, where  $\tau_{k}:=\frac{\nm{x^{k+1}-x^{k}}}{\nm{x^{k}-x^{*}}}\in\left[\frac{\mu-\sqrt{2\mu\epsilon-\epsilon^{2}}}{\mu-\epsilon},\frac{\mu+\sqrt{2\mu\epsilon-\epsilon^{2}}}{\mu-\epsilon}\right]$. Furthermore, the sequence $\{x^{k}\}$ converges superlinearly to $x^{*}$.
\end{theorem}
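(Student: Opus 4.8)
The plan is to feed $x=x^{*}$ into the modified fundamental inequality (\ref{ineq}) and convert it into a contraction estimate on $\nm{x^{k+1}-x^{*}}$. Since (\ref{ineq}) requires $t_{k}=1$, the first task is to show the unit step is eventually accepted. By Theorem \ref{T2} we have $x^{k}\to x^{*}$, and because $x^{*}$ is Pareto critical, the stated properties of $d(\cdot)$ give $d^{k}=d(x^{k})\to d(x^{*})=0$. A second-order expansion shows that $F_{i}(x^{k}+d^{k})-F_{i}(x^{k})$ equals its quadratic-model value (which is $\leq\theta^{k}$) plus $\tfrac12\dual{d^{k},(\nabla^{2}f_{i}(\xi)-\nabla^{2}f_{i}(x^{k}))d^{k}}$ for some $\xi$ on $[x^{k},x^{k}+d^{k}]$. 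Invoking Lemma \ref{lem3.2} as $-\theta^{k}\geq\tfrac{\mu}{2}\nm{d^{k}}^{2}$, the sufficient-decrease test at $t=1$ holds once $\nm{\nabla^{2}f_{i}(\xi)-\nabla^{2}f_{i}(x^{k})}\leq(1-\sigma)\mu$; continuity of the Hessians together with $d^{k}\to0$ guarantees this for all large $k$, so there is an index beyond which $t_{k}=1$.

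Next I would exploit the structure of the remainder integrals in (\ref{ineq}). If the Hessian were constant the double integrals would reduce to $\tfrac12\nm{x^{k+1}-x^{k}}^{2}_{\nabla^{2}f_{\lambda^{k}}(x^{k})}$ and $\tfrac12\nm{x-x^{k}}^{2}_{\nabla^{2}f_{\lambda^{k}}(x^{k})}$, cancelling two quadratic terms and leaving exactly $-\tfrac12\nm{x^{k+1}-x}^{2}_{\nabla^{2}f_{\lambda^{k}}(x^{k})}$ (this is the quadratic-termination computation). In general I would define $E_{1},E_{2}$ as the difference between each integral and this constant-Hessian value, so that (\ref{ineq}) becomes
\begin{equation*}
	F_{\lambda^{k}}(x^{k+1})-F_{\lambda^{k}}(x)\leq-\frac{1}{2}\nm{x^{k+1}-x}^{2}_{\nabla^{2}f_{\lambda^{k}}(x^{k})}+E_{1}-E_{2},
\end{equation*}
where $E_{1},E_{2}$ carry integrands of the form $\nabla^{2}f_{\lambda^{k}}(x^{k}+st(\cdot))-\nabla^{2}f_{\lambda^{k}}(x^{k})$. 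Since $\lambda^{k}\in\Delta_{m}$ and $\nm{\sum_{i}\lambda_{i}^{k}M_{i}}\leq\max_{i}\nm{M_{i}}$, uniform continuity of each $\nabla^{2}f_{i}$ on a compact neighbourhood of $x^{*}$ yields, for every prescribed $\epsilon>0$ and all large $k$, the operator bound $\epsilon$ on these integrands along the segments joining $x^{k}$ to $x^{k+1}$ and to $x$; using $\int_{0}^{1}\!\int_{0}^{1}t\,ds\,dt=\tfrac12$ this gives $\snm{E_{1}}\leq\tfrac{\epsilon}{2}\nm{x^{k+1}-x^{k}}^{2}$ and $\snm{E_{2}}\leq\tfrac{\epsilon}{2}\nm{x-x^{k}}^{2}$.

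Setting $x=x^{*}$, using $F(x^{*})\preceq F(x^{k+1})$ (so the left-hand side is nonnegative) and strong convexity $\mu I\preceq\nabla^{2}f_{\lambda^{k}}(x^{k})$, the displayed inequality collapses to
\begin{equation*}
	\mu\nm{x^{k+1}-x^{*}}^{2}\leq\epsilon\left(\nm{x^{k+1}-x^{k}}^{2}+\nm{x^{k}-x^{*}}^{2}\right),
\end{equation*}
which, divided by $\nm{x^{k}-x^{*}}^{2}$ and written through $\tau_{k}$, is the asserted estimate. The range of $\tau_{k}$ then follows from the reverse triangle inequality $\nm{x^{k+1}-x^{*}}\geq\snm{\nm{x^{k}-x^{*}}-\nm{x^{k+1}-x^{k}}}$: squaring and inserting into the last display gives $\mu(\nm{x^{k}-x^{*}}-\nm{x^{k+1}-x^{k}})^{2}\leq\epsilon(\nm{x^{k+1}-x^{k}}^{2}+\nm{x^{k}-x^{*}}^{2})$, i.e. $(\mu-\epsilon)\tau_{k}^{2}-2\mu\tau_{k}+(\mu-\epsilon)\leq0$, whose roots are the stated endpoints (their product is $1$ and $\mu-\epsilon>0$ because $\epsilon\leq(1-\sigma)\mu<\mu$). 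For superlinear convergence I would let $\epsilon\downarrow0$: the upper endpoint tends to $1$, so the contraction factor $\sqrt{\epsilon(1+\tau_{k}^{2})/\mu}$ is dominated by a quantity that vanishes as $\epsilon\to0$; choosing $\epsilon$ small makes this factor arbitrarily small for $k\geq K_{\epsilon}$, whence $\nm{x^{k+1}-x^{*}}/\nm{x^{k}-x^{*}}\to0$.

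I expect the main obstacle to be the justification that $t_{k}=1$ for all large $k$, since this is what unlocks inequality (\ref{ineq}); it hinges on combining $d^{k}\to0$ with the uniform (over $\lambda^{k}\in\Delta_{m}$) Hessian-continuity control, and on the sharp use of $-\theta^{k}\geq\tfrac{\mu}{2}\nm{d^{k}}^{2}$ to absorb the second-order line-search error. The remaining steps are essentially bookkeeping on the integral remainders and an elementary quadratic-inequality analysis.
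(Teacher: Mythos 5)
Your proposal is correct and takes essentially the same route as the paper's own proof: eventual acceptance of the unit step via Lemma \ref{lem3.2} together with uniform continuity of the Hessians, substitution of $x=x^{*}$ into (\ref{ineq}) with the constant-Hessian decomposition of the integral remainders (the paper's inequality (\ref{e5})), the resulting bound $\mu\nm{x^{k+1}-x^{*}}^{2}\leq\epsilon\left(\nm{x^{k+1}-x^{k}}^{2}+\nm{x^{k}-x^{*}}^{2}\right)$, and the same quadratic inequality $(\mu-\epsilon)\tau_{k}^{2}-2\mu\tau_{k}+(\mu-\epsilon)\leq0$ followed by letting $\epsilon\downarrow0$. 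Your Lagrange-remainder expansion in the line-search step and your explicit reverse-triangle argument for the range of $\tau_{k}$ are only cosmetic variants of the paper's integral-form estimate and expanded-square computation.
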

\begin{proof}
	Referring to the arguments presented in the proof of Theorem \ref{T2}, we conclude that $\{x^{k}\}$ converges to a certain Pareto point $x^{*}$ and $\{d^{k}\}$ converges to $0$. This implies that for any $r>0$, there exists $K_{r}>0$ such that $x^{k},x^{k}+d^{k}\in B[x^{*},r]$ for all $k\geq K_{r}$. Given that $\nabla^{2}f_{i}$ is continuous, it follows that $\nabla^{2}f_{i}$ is uniformly continuous on the compact set $B[x^{*},r]$. For any $0<\epsilon\leq(1-\sigma)\mu$, there exists $K^{1}_{\epsilon}\geq K_{r}$ such that, for all $k\geq K^{1}_{\epsilon}$,
	\begin{align*}
		F_{i}(x^{k}+d^{k})-F_{i}(x^{k})&\leq\dual{\nabla f_{i}(x^{k}),d^{k}}+g_{i}(x^{k}+d^{k})-g_{i}(x^{k})+\frac{1}{2}\dual{d^{k},\nabla^{2}f_{i}(x^{k})d^{k}}+\frac{\epsilon}{2}\|d^{k}\|^{2}\\
		&\leq\max\limits_{i\in[m]}\left\{\dual{\nabla f_{i}(x^{k}),d^{k}}+g_{i}(x^{k}+d^{k})-g_{i}(x^{k})+\frac{1}{2}\dual{d^{k},\nabla^{2}f_{i}(x^{k})d^{k}}\right\}+\frac{\epsilon}{2}\|d^{k}\|^{2}\\
		&=\theta^{k}+\frac{\epsilon}{2}\|d^{k}\|^{2}\\
		&=\sigma\theta^{k}+(1-\sigma)\theta^{k}+\frac{\epsilon}{2}\|d^{k}\|^{2}\\
		&\leq\sigma\theta^{k},
	\end{align*}
	where the last inequality is due to the facts that $\theta^{k}\leq-\frac{\mu}{2}\nm{d^{k}}^{2}$ (Lemma \ref{lem3.2}) and $\epsilon\leq(1-\sigma)\mu$. Consequently, we can deduce that $t_{k}=1$ for all $k\geq K^{1}_{\epsilon}$.
	Substituting $x=x^{*}$ into (\ref{ineq}), we obtain
	\begin{equation}\label{e4}
		\begin{aligned}
			0&\leq F_{\lambda^{k}}(x^{k+1}) - F_{\lambda^{k}}(x^{*})\\
			&\leq\frac{1}{2}\nm{x^{k}-x^{*}}^{2}_{\nabla^{2}f_{\lambda^{k}}(x^{k})}-\frac{1}{2}\nm{x^{k+1}-x^{k}}^{2}_{\nabla^{2}f_{\lambda^{k}}(x^{k})}-\frac{1}{2}\nm{x^{k+1}-x^{*}}^{2}_{\nabla^{2}f_{\lambda^{k}}(x^{k})}\\
			&~~~~+\dual{x^{k+1}-x^{k},\int_{0}^{1}\int_{0}^{1}\nabla^{2}f_{\lambda^{k}}(x^{k}+st(x^{k+1}-x^{k}))ds(t(x^{k+1}-x^{k}))dt}\\
			&~~~~-\dual{x^{*}-x^{k},\int_{0}^{1}\int_{0}^{1}\nabla^{2}f_{\lambda^{k}}(x^{k}+st(x^{*}-x^{k}))ds(t(x^{*}-x^{k}))dt},
		\end{aligned}
	\end{equation}
	where the first inequality comes from the fact $F(x^{*})\preceq F(x^{k})$ for all $k$.
	On the other hand, $\frac{1}{2}\nm{x-x^{k}}^{2}_{\nabla^{2}f_{\lambda^{k}}(x^{k})}$ can be equivalently expressed as
	$$\dual{x-x^{k},\int_{0}^{1}\int_{0}^{1}\nabla^{2}f_{\lambda^{k}}(x^{k})ds(t(x-x^{k}))dt}.$$
	By substituting the above relation into (\ref{e4}) with $x=x^{*}$ and $x=x^{k+1}$, respectively, we have
	\begin{equation}\label{e5}
		\begin{aligned}
			&~~~~\frac{1}{2}\nm{x^{k+1}-x^{*}}^{2}_{\nabla^{2}f_{\lambda^{k}}(x^{k})}\\
			&\leq\dual{x^{k+1}-x^{k},\int_{0}^{1}\int_{0}^{1}\left(\nabla^{2}f_{\lambda^{k}}(x^{k}+st(x^{k+1}-x^{k}))-\nabla^{2}f_{\lambda^{k}}(x^{k})\right)ds(t(x^{k+1}-x^{k}))dt}\\
			&~~~~-\dual{x^{*}-x^{k},\int_{0}^{1}\int_{0}^{1}\left(\nabla^{2}f_{\lambda^{k}}(x^{k}+st(x^{*}-x^{k}))-\nabla^{2}f_{\lambda^{k}}(x^{k})\right)ds(t(x^{*}-x^{k}))dt}.
		\end{aligned}
	\end{equation}
	Since the sequence $\{x^{k}\}$ converges to a Pareto solution $x^{*}$, there exists $K_{\epsilon}\geq K^{1}_{\epsilon}$ such that, for all $k\geq K_{\epsilon}$, 
	$$\nm{\nabla^{2}f_{\lambda^{k}}(x^{k}+st(x^{k+1}-x^{k}))-\nabla^{2}f_{\lambda^{k}}(x^{k})}\leq\epsilon,~\forall s,t\in[0,1],$$
	and
	$$\nm{\nabla^{2}f_{\lambda^{k}}(x^{k}+st(x^{*}-x^{k}))-\nabla^{2}f_{\lambda^{k}}(x^{k})}\leq\epsilon,~\forall s,t\in[0,1].$$
	Substituting these two bounds into (\ref{e5}), we obtain
	$$\frac{1}{2}\nm{x^{k+1}-x^{*}}^{2}_{\nabla^{2}f_{\lambda^{k}}(x^{k})}\leq\frac{\epsilon}{2}\nm{x^{k+1}-x^{k}}^{2}+\frac{\epsilon}{2}\nm{x^{*}-x^{k}}^{2}.$$
	This, together with $\mu$-strong convexity of $f_{i}$, implies
	\begin{equation}\label{e6}
		\mu\nm{x^{k+1}-x^{*}}^{2}\leq\epsilon\nm{x^{k+1}-x^{k}}^{2}+\epsilon\nm{x^{*}-x^{k}}^{2}.
	\end{equation}
	Through direct calculation, we have
	\begin{align*}
		&~~~~\epsilon\nm{x^{k+1}-x^{k}}^{2}+\epsilon\nm{x^{*}-x^{k}}^{2}\\
		&\geq\mu\nm{x^{k+1}-x^{*}}^{2}\\
		&=\mu\nm{x^{k+1}-x^{k}+x^{k}-x^{*}}^{2}\\
		&\geq \mu\nm{x^{k+1}-x^{k}}^{2} + \mu\nm{x^{k}-x^{*}}^{2}-2\mu\nm{x^{k+1}-x^{k}}\nm{x^{k}-x^{*}}.
	\end{align*}
	Rearranging and dividing by $\nm{x^{k}-x^{*}}^{2}$, we have
	$$(\mu-\epsilon)\tau_{k}^{2}-2\mu\tau_{k}+\mu-\epsilon\leq0,$$
	where $\tau_{k}:=\frac{\nm{x^{k+1}-x^{k}}}{\nm{x^{k}-x^{*}}}$. Since $\epsilon\leq(1-\sigma)\mu$, we deduce that $\tau_{k}\in\left[\frac{\mu-\sqrt{2\mu\epsilon-\epsilon^{2}}}{\mu-\epsilon},\frac{\mu+\sqrt{2\mu\epsilon-\epsilon^{2}}}{\mu-\epsilon}\right]$.
	Substituting $\tau_{k}$ in to relation (\ref{e6}), we derive that
	$$\nm{x^{k+1}-x^{*}}\leq\sqrt{\frac{\epsilon(1+\tau_{k}^{2})}{\mu}}\nm{x^{k}-x^{*}}.$$
	Furthermore, since $\epsilon$ tends to $0$ when $k$ tends to infinity, it follows that
	$$\lim\limits_{k\rightarrow\infty}\tau_{k}\in\lim\limits_{\epsilon\rightarrow0}\left[\frac{\mu-\sqrt{2\mu\epsilon-\epsilon^{2}}}{\mu-\epsilon},\frac{\mu+\sqrt{2\mu\epsilon-\epsilon^{2}}}{\mu-\epsilon}\right]=\{1\}.$$
	We use the relation to get
	$$\lim\limits_{k\rightarrow\infty}\frac{\nm{x^{k+1}-x^{*}}}{\nm{x^{k}-x^{*}}}=0.$$
	This concludes the proof.
\end{proof}

\subsection{Quadratic convergence}
The additional assumption of Lipschitz continuity
of the Hessian $\nabla^{2} f_{i}$ for $i\in[m]$ guarantees a quadratic convergence rate of the NPGMO, as we will now demonstrate.
\begin{theorem}
	Suppose $f_{i}$ is strongly convex with module $\mu>0$, and its Hessian is Lipschitz continuous with constant $L_{2}$ for $i\in[m]$. Let $\{x^{k}\}$ be the sequence produced by Algorithm \ref{npgmo}. Then, for all $\epsilon>0$, there exists $K_{\epsilon}>0$ such that $$\nm{x^{k+1}-x^{*}}\leq\frac{(2\tau_{k}+1)L_{2}}{3\mu-\tau_{k}L\nm{x^{k}-x^{*}}}\nm{x^{k}-x^{*}}^{2}$$ holds for all $k\geq K_{\epsilon}$, where  $\tau_{k}:=\frac{\nm{x^{k+1}-x^{k}}}{\nm{x^{k}-x^{*}}}\in\left[\frac{\mu-\sqrt{2\mu\epsilon-\epsilon^{2}}}{\mu-\epsilon},\frac{\mu+\sqrt{2\mu\epsilon-\epsilon^{2}}}{\mu-\epsilon}\right]$. Furthermore, the sequence $\{x^{k}\}$ converges quadratically to $x^{*}$.
\end{theorem}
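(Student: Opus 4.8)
The plan is to follow the template of Theorem \ref{T4} essentially verbatim up through the fundamental inequality, and only at the final estimation step to replace the uniform-continuity modulus $\epsilon$ by the quantitative Lipschitz bound on $\nabla^2 f_{\lambda^k}$. First I would record, exactly as in Theorem \ref{T4}, that $\{x^k\}\to x^*$ with $d^k\to0$, and that $t_k=1$ for all sufficiently large $k$; this again follows from Lemma \ref{lem3.2} together with $\epsilon\le(1-\sigma)\mu$, so that the unit step passes the Armijo test. Consequently the identity \eqref{e5} is available for all large $k$, and I would note that $\nabla^2 f_{\lambda^k}=\sum_{i}\lambda^k_i\nabla^2 f_i$ is Lipschitz with the same constant $L_2$, being a convex combination.

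Next I would estimate the two remainder integrals in \eqref{e5} using $\nm{\nabla^2 f_{\lambda^k}(x^k+st v)-\nabla^2 f_{\lambda^k}(x^k)}\le L_2\,st\,\nm{v}$ instead of the bound $\epsilon$. Carrying the factors $st$ through the double integral $\int_0^1\int_0^1 st^2\,ds\,dt=\tfrac16$ turns the right-hand side of \eqref{e5} into a cubic expression in $\nm{x^{k+1}-x^k}$ and $\nm{x^k-x^*}$. To reach a genuinely quadratic estimate I would then split the outer vectors of both inner products by the triangle inequality, writing $x^{k+1}-x^k=(x^{k+1}-x^*)+(x^*-x^k)$ and $x^*-x^k=(x^*-x^{k+1})+(x^{k+1}-x^k)$, so as to expose a factor $\nm{x^{k+1}-x^*}$ in the dominant terms. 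Combining with the strong-convexity lower bound $\mu\nm{x^{k+1}-x^*}^2\le\nm{x^{k+1}-x^*}^2_{\nabla^2 f_{\lambda^k}(x^k)}$ and cancelling one factor $\nm{x^{k+1}-x^*}$ leaves a relation that is linear in $\nm{x^{k+1}-x^*}$; moving the terms that still carry $\nm{x^{k+1}-x^*}$ to the left produces the denominator $3\mu-\tau_k L\nm{x^k-x^*}$, which is positive once $k$ is large because $\nm{x^k-x^*}\to0$.

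Writing $\nm{x^{k+1}-x^k}=\tau_k\nm{x^k-x^*}$ and importing the bracket for $\tau_k$ from Theorem \ref{T4} (it comes from the same quadratic inequality $(\mu-\epsilon)\tau_k^2-2\mu\tau_k+\mu-\epsilon\le0$) then yields the stated estimate. Finally, since $\tau_k\to1$ and $\nm{x^k-x^*}\to0$, the coefficient $\tfrac{(2\tau_k+1)L_2}{3\mu-\tau_k L\nm{x^k-x^*}}$ converges to $L_2/\mu$, hence stays bounded, and $\nm{x^{k+1}-x^*}\le C\nm{x^k-x^*}^2$ for all large $k$, i.e.\ the convergence is quadratic.

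The step I expect to be the genuine obstacle is the upgrade from the cubic estimate to the quadratic one. A naive bound of the two remainder integrals by $\tfrac{L_2}{6}\nm{x^{k+1}-x^k}^3+\tfrac{L_2}{6}\nm{x^k-x^*}^3$ only delivers $\nm{x^{k+1}-x^*}\le C\nm{x^k-x^*}^{3/2}$, because one power of $\nm{x^k-x^*}$ is \emph{wasted} and cannot be divided out. The decisive point is therefore to arrange the decomposition of the outer vectors --- and, if a clean factorization is not attainable from \eqref{e5} alone, to exploit the optimality inclusion \eqref{subgrad} at $x^{k+1}$ together with Pareto criticality at $x^*$ --- so that every dominant remainder term carries a factor $\nm{x^{k+1}-x^*}$; only then does division by $\nm{x^{k+1}-x^*}$ leave a clean $\nm{x^k-x^*}^2$. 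Checking that the residual mixed terms are of strictly lower order and that the denominator stays bounded away from zero is the crux of the argument.
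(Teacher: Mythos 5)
Your proposal is correct and follows essentially the same route as the paper's proof: the same reduction to \eqref{e5} with $t_{k}=1$ and the $\tau_{k}$ bracket imported from Theorem \ref{T4}, the same Lipschitz estimate $L_{2}st\nm{v}$ pushed through $\int_{0}^{1}\int_{0}^{1}st^{2}\,ds\,dt=\tfrac{1}{6}$, and the same rearrangement against the strong-convexity bound $\mu\nm{x^{k+1}-x^{*}}^{2}\leq\nm{x^{k+1}-x^{*}}^{2}_{\nabla^{2}f_{\lambda^{k}}(x^{k})}$ producing the denominator $3\mu-\tau_{k}L_{2}\nm{x^{k}-x^{*}}$ and the limit $L_{2}/\mu$. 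The ``decisive point'' you flag is resolved in the paper exactly as you anticipate from \eqref{e5} alone --- it decomposes $x^{k+1}-x^{k}=(x^{k+1}-x^{*})+(x^{*}-x^{k})$ in both slots of the first remainder term and telescopes the leftover $(x^{*}-x^{k})$-quadratic piece against the second remainder, where Lipschitz continuity between the shifted Hessian arguments $x^{k}+st(x^{k+1}-x^{k})$ and $x^{k}+st(x^{*}-x^{k})$ yields the factor $L_{2}st\nm{x^{k+1}-x^{*}}$ --- so your fallback via \eqref{subgrad} is unnecessary.
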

\begin{proof}
	Drawing from the arguments presented in the proof of Theorem \ref{T4}, we can establish that for any $0<\epsilon\leq(1-\sigma)\mu$, there exists a threshold $K_{\epsilon}>0$ such that, for all $k\geq K_{\epsilon}$,
	$$t_{k}=1$$ and  $$\tau_{k}\in\left[\frac{\mu-\sqrt{2\mu\epsilon-\epsilon^{2}}}{\mu-\epsilon},\frac{\mu+\sqrt{2\mu\epsilon-\epsilon^{2}}}{\mu-\epsilon}\right],$$ 
	where $\tau_{k}=\frac{\nm{x^{k+1}-x^{k}}}{\nm{x^{k}-x^{*}}}$.
	Utilizing relation (\ref{e5}), we deduce that
	\begin{small}
		\begin{align*}
			&~~~~\frac{1}{2}\nm{x^{k+1}-x^{*}}^{2}_{\nabla^{2}f_{\lambda^{k}}(x^{k})}\\
			&\leq\dual{x^{k+1}-x^{*}+x^{*}-x^{k},\int_{0}^{1}\int_{0}^{1}\left(\nabla^{2}f_{\lambda^{k}}(x^{k}+st(x^{k+1}-x^{k}))-\nabla^{2}f_{\lambda^{k}}(x^{k})\right)ds(t(x^{k+1}-x^{*}+x^{*}-x^{k}))dt}\\
			&~~~~-\dual{x^{*}-x^{k},\int_{0}^{1}\int_{0}^{1}\left(\nabla^{2}f_{\lambda^{k}}(x^{k}+st(x^{*}-x^{k}))-\nabla^{2}f_{\lambda^{k}}(x^{k})\right)ds(t(x^{*}-x^{k}))dt}\\
			&=\dual{x^{k+1}-x^{*},\int_{0}^{1}\int_{0}^{1}\left(\nabla^{2}f_{\lambda^{k}}(x^{k}+st(x^{k+1}-x^{k}))-\nabla^{2}f_{\lambda^{k}}(x^{k})\right)ds(t(x^{k+1}-x^{*}))dt}\\
			&~~~~+2\dual{x^{k+1}-x^{*},\int_{0}^{1}\int_{0}^{1}\left(\nabla^{2}f_{\lambda^{k}}(x^{k}+st(x^{k+1}-x^{k}))-\nabla^{2}f_{\lambda^{k}}(x^{k})\right)ds(t(x^{*}-x^{k}))dt}\\
			&~~~~+\dual{x^{*}-x^{k},\int_{0}^{1}\int_{0}^{1}\left(\nabla^{2}f_{\lambda^{k}}(x^{k}+st(x^{k+1}-x^{k}))-\nabla^{2}f_{\lambda^{k}}(x^{k}+st(x^{*}-x^{k}))\right)ds(t(x^{*}-x^{k}))dt}
		\end{align*}
			\begin{align*}
			&\leq\int_{0}^{1}\int_{0}^{1}L_{2}st^{2}\nm{x^{k+1}-x^{k}}dsdt\nm{x^{k+1}-x^{*}}^{2} +2\int_{0}^{1}\int_{0}^{1}L_{2}st^{2}\nm{x^{k+1}-x^{k}}dsdt\nm{x^{k+1}-x^{*}}\nm{x^{k}-x^{*}}\\
			&~~~~+\int_{0}^{1}\int_{0}^{1}L_{2}st^{2}\nm{x^{k+1}-x^{*}}dsdt\nm{x^{k}-x^{*}}^{2}\\
			&=\frac{L_{2}}{6}\nm{x^{k+1}-x^{*}}\left(\nm{x^{k+1}-x^{k}}\nm{x^{k+1}-x^{*}}+2\nm{x^{k+1}-x^{k}}\nm{x^{k}-x^{*}}+\nm{x^{k}-x^{*}}^{2}\right)\\
			&=\frac{L_{2}}{6}\nm{x^{k+1}-x^{*}}\left(\tau_{k}\nm{x^{k}-x^{*}}\nm{x^{k+1}-x^{*}}+(2\tau_{k}+1)\nm{x^{k}-x^{*}}^{2}\right),
		\end{align*}
	\end{small}
where the second inequality can be attributed to the Lipschitz continuity of $\nabla^{2}f_{i}$ for $i\in[m]$,  while the final equality originates from the definition of $\tau_{k}$. By reordering terms and leveraging the $\mu$-strong convexity of $f_{i}$, we derive
	$$\nm{x^{k+1}-x^{*}}\leq\frac{(2\tau_{k}+1)L_{2}}{3\mu-\tau_{k}L_{2}\nm{x^{k}-x^{*}}}\nm{x^{k}-x^{*}}^{2}.$$
	This, together with the convergence of ${x^{k}}$ to $x^{*}$ and the limit $\lim\limits_{k\rightarrow\infty}\tau_{k}=1$, leads to
	$$\lim\limits_{k\rightarrow\infty}\frac{\nm{x^{k+1}-x^{*}}}{\nm{x^{k}-x^{*}}^{2}}=\frac{L_{2}}{\mu}.$$
	This concludes the proof.
\end{proof}

\section{Conclusions}
In this paper, we have demonstrated the appealing convergence properties of NPGMO, including quadratic termination, locally superlinear convergence, and locally quadratic convergence. These results were established within a unified framework, which can potentially serve as a template for analyzing second-order methods for MOPs. 

\bibliographystyle{abbrv}
\bibliography{references}

\begin{thebibliography}{10}

\bibitem{A2023}
M.~A.~T. Ansary.
\newblock A newton-type proximal gradient method for nonlinear multi-objective
  optimization problems.
\newblock {\em Optimization Methods and Software}, 38(3):570--590, 2023.

\bibitem{AP2021}
M.~A.~T. Ansary and G.~Panda.
\newblock A globally convergent {SQCQP} method for multiobjective optimization
  problems.
\newblock {\em SIAM Journal on Optimization}, 31(1):91--113, 2021.

\bibitem{BI2005}
H.~Bonnel, A.~N. Iusem, and B.~F. Svaiter.
\newblock Proximal methods in vector optimization.
\newblock {\em SIAM Journal on Optimization}, 15(4):953--970, 2005.

\bibitem{CL2016}
G.~A. Carrizo, P.~A. Lotito, and M.~C. Maciel.
\newblock Trust region globalization strategy for the nonconvex unconstrained
  multiobjective optimization problem.
\newblock {\em Mathematical Programming}, 159(1):339--369, 2016.

\bibitem{CT1993}
G.~Chen and M.~Teboulle.
\newblock Convergence analysis of a proximal-like minimization algorithm using
  bregman functions.
\newblock {\em SIAM Journal on Optimization}, 3(3):538--543, 1993.

\bibitem{CTY2023}
J.~Chen, L.~P. Tang, and X.~M. Yang.
\newblock A {B}arzilai-{B}orwein descent method for multiobjective optimization
  problems.
\newblock {\em European Journal of Operational Research}, 311(1):196--209,
  2023.

\bibitem{FD2009}
J.~Fliege, L.~M. Gra$\mathrm{\tilde{n}}$a~Drummond, and B.~F. Svaiter.
\newblock Newton's method for multiobjective optimization.
\newblock {\em SIAM Journal on Optimization}, 20(2):602--626, 2009.

\bibitem{FS2000}
J.~Fliege and B.~F. Svaiter.
\newblock Steepest descent methods for multicriteria optimization.
\newblock {\em Mathematical Methods of Operations Research}, 51(3):479--494,
  2000.

\bibitem{FV2016}
J.~Fliege and A.~I.~F. Vaz.
\newblock A method for constrained multiobjective optimization based on {SQP}
  techniques.
\newblock {\em SIAM Journal on Optimization}, 26(4):2091--2119, 2016.

\bibitem{GI2004}
L.~M. Gra$\mathrm{\tilde{n}}$a~Drummond and A.~N. Iusem.
\newblock A projected gradient method for vector optimization problems.
\newblock {\em Computational Optimization and Applications}, 28(1):5--29, 2004.

\bibitem{LSS2014}
J.~D. Lee, Y.~Sun, and M.~A. Saunders.
\newblock Proximal newton-type methods for minimizing composite functions.
\newblock {\em SIAM Journal on Optimization}, 24(3):1420--1443, 2014.

\bibitem{LP2018}
L.~R. Lucambio~P\'{e}rez and L.~F. Prudente.
\newblock Nonlinear conjugate gradient methods for vector optimization.
\newblock {\em SIAM Journal on Optimization}, 28(3):2690--2720, 2018.

\bibitem{MP2018}
Q.~Mercier, F.~Poirion, and J.~A. D{\'e}sid{\'e}ri.
\newblock A stochastic multiple gradient descent algorithm.
\newblock {\em European Journal of Operational Research}, 271(3):808--817,
  2018.

\bibitem{MP2019}
V.~Morovati and L.~Pourkarimi.
\newblock Extension of zoutendijk method for solving constrained multiobjective
  optimization problems.
\newblock {\em European Journal of Operational Research}, 273(1):44--57, 2019.

\bibitem{M1980}
H.~Mukai.
\newblock Algorithms for multicriterion optimization.
\newblock {\em IEEE Transactions on Automatic Control}, 25(2):177--186, 1980.

\bibitem{P2014}
{\v{Z}}.~Povalej.
\newblock Quasi-{N}ewton's method for multiobjective optimization.
\newblock {\em Journal of Computational and Applied Mathematics}, 255:765--777,
  2014.

\bibitem{QG2011}
S.~Qu, M.~Goh, and F.~T. Chan.
\newblock Quasi-{N}ewton methods for solving multiobjective optimization.
\newblock {\em Operations Research Letters}, 39(5):397--399, 2011.

\bibitem{S1958}
M.~Sion.
\newblock On general minimax theorems.
\newblock {\em Pacific Journal of Mathematics}, 8(1):171--176, 1958.

\bibitem{TFY2019}
H.~Tanabe, E.~H. Fukuda, and N.~Yamashita.
\newblock Proximal gradient methods for multiobjective optimization and their
  applications.
\newblock {\em Computational Optimization and Applications}, 72:339--361, 2019.

\bibitem{TFY2023}
H.~Tanabe, E.~H. Fukuda, and N.~Yamashita.
\newblock Convergence rates analysis of a multiobjective proximal gradient
  method.
\newblock {\em Optimization Letters}, 17:333--350, 2023.

\end{thebibliography}

\begin{acknowledgements}
This work was funded by the Major Program of the National Natural Science Foundation of China [grant numbers 11991020, 11991024]; the National Natural Science Foundation of China [grant numbers 11971084, 12171060]; NSFC-RGC (Hong Kong) Joint Research Program [grant number 12261160365]; the Team Project of Innovation Leading Talent in Chongqing [grant number CQYC20210309536]; the Natural Science Foundation of Chongqing [grant number ncamc2022-msxm01]; and Foundation of Chongqing Normal University [grant numbers 22XLB005, 22XLB006].
\end{acknowledgements}

\end{document}